\documentclass[reqno]{amsart}

\usepackage{amssymb,amsmath,amsthm}
\usepackage[capitalise]{cleveref}
\usepackage{enumitem}
\usepackage{xparse}
\usepackage[cmtip,arrow,matrix,line]{xy}

\newtheorem{theorem}{Theorem}[section]
\newtheorem*{thmnonum}{Theorem}
\newtheorem{proposition}[theorem]{Proposition}
\newtheorem{lemma}[theorem]{Lemma}
\newtheorem{corollary}[theorem]{Corollary}
\newtheorem*{cornonum}{Corollary}

\DeclareMathOperator{\add}{add}

\DeclareMathOperator{\End}{End}

\DeclareMathOperator{\Hom}{Hom}

\DeclareMathOperator{\pd}{pd}

\renewcommand{\mod}{\operatorname{mod}} 
\renewcommand{\H}{\textup{\textsf{H}}}

\newcommand{\A}{\mathcal{A}}
\newcommand{\C}{\mathcal{C}}
\newcommand{\D}{\mathcal{D}}

\newcommand{\I}{\mathcal{I}}
\newcommand{\T}{\mathcal{T}}
\newcommand{\X}{\mathcal{X}}
\newcommand{\Y}{\mathcal{Y}}
\newcommand{\Z}{\mathcal{Z}}
 
\NewDocumentCommand\set{m+g}{\left\{#1\IfNoValueTF{#2}{}{\;\left|\;#2\right.}\right\}}

\newenvironment{defenum}{\begin{enumerate}[label=(\arabic{*})]}{\end{enumerate}}
\newenvironment{thmenum}{\begin{enumerate}[label=(\alph{*})]}{\end{enumerate}}

\begin{document}

\title{Projective Dimensions in Cluster-Tilted Categories}
\author{Alex Lasnier}
\address{D\'epartement de math\'ematiques, Universit\'e de Sherbrooke, 2500 boul. de l'Universit\'e, Sherbrooke (Qu\'ebec), J1K 2R1, Canada.}
\email{alex.lasnier@usherbrooke.ca}


\begin{abstract}
We study the projective dimensions of the restriction of functors $\Hom_\C(-,X)$ to a contravariantly finite rigid subcategory $\T$ of a triangulated category $\C$.
We show that the projective dimension of $\left.\Hom_\C(-,X)\right|_\T$ is at most one if and only if there are no non-zero morphisms between objects in $\T[1]$ factoring through $X$, when the object $X$ belongs to a suitable subcategory of $\C$.
As a consequence, we obtain a characterisation of the objects of infinite projective dimension in the category of finitely presented contravariant functors on a cluster-tilting subcategory of $\C$.
\end{abstract}

\maketitle

%
\section{Introduction}
%

Cluster categories were introduced in \cite{BMRRT} to provide a categorical framework for the study of acyclic cluster algebras.
Let $\C$ be a cluster category.
An object $T$ in $\C$ is called a tilting object if the following holds: $\Hom_\C(T,X[1])=0$ if and only if $X\in\add T$.
These tilting objects provide a model for the combinatorics of the corresponding cluster algebra.
Cluster-tilted algebras were defined in \cite{BMR} as endomorphism algebras of the form $\End_\C(T)$ where $T$ is a tilting object in $\C$.
 
In \cite{BBT}, Beaudet, Br\"{u}stle and Todorov characterise the modules of infinite projective dimension over a cluster-tilted algebra by investigating certain ideals of $\End_\C(T[1])$. Such an ideal, denoted $I_M$, is given by the endomorphisms of $T[1]$ factoring through $M$, where $M$ is an object in $\C$. They show that the $\End_\C(T)$ module $\Hom_\C(T,M)$ has infinite projective dimension if and only if $I_M$ is non-zero.

In this paper, we seek to generalise this characterisation in the following setting: 
Let $\C$ be a $\Hom$-finite Krull-Schmidt triangulated category and let $\T$ be a contravariantly finite subcategory of $\C$.
It is well-known that $\mod\T$ the category of finitely presented contravariant functors on $\T$ is abelian and has enough projectives.
Thus, we can meaningfully talk about the projective dimension of an object $F$ in $\mod\T$, which we denote $\pd F$.

For a subcategory $\D$ of $\C$, let $\I_{X}(\D)$ be the ideal of $\D$ formed by the morphisms between objects in $\D$ factoring through an object $X$ in $\C$.
Our aim is to study the projective dimensions of the functors $\left.\Hom_\C(-,X)\right|_\T$ in terms of the factorisation ideals $\I_X$.
Our main result is the following:
\begin{thmnonum}
Let $\T$ be a contravariantly finite subcategory of $\C$ such that $\Hom_\C(\T,\T[1])=0$.
Let $X$ be an object in $\C$ having no direct summands in $\T[1]$. If there is a triangle
\[ \xymatrix{ T_1 \ar[r] & T_0 \ar[r] & X \ar[r] & T_1[1] } \]
with $T_0,T_1\in\T$, then 
\[ \pd \left.\Hom_\C(-,X)\right|_\T \leq 1 \text{ if and only if } \I_X(\T[1])=0. \]
\end{thmnonum}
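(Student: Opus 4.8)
The plan is to turn the triangle into a projective presentation of $F := \left.\Hom_\C(-,X)\right|_\T$ in $\mod\T$ and then read off the first syzygy. Writing the triangle as $T_1 \xrightarrow{g} T_0 \xrightarrow{h} X \xrightarrow{w} T_1[1]$ and applying $\Hom_\C(T',-)$ for $T' \in \T$, rigidity gives $\Hom_\C(T', T_1[1]) = 0$, so the sequence $\Hom_\C(T',T_1) \to \Hom_\C(T',T_0) \to \Hom_\C(T',X) \to 0$ is exact. Hence
\[ \left.\Hom_\C(-,T_1)\right|_\T \xrightarrow{f} \left.\Hom_\C(-,T_0)\right|_\T \to F \to 0 \]
is a projective presentation, where $f = \left.\Hom_\C(-,g)\right|_\T$. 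Since $\pd F \le 1$ holds exactly when the first syzygy $\operatorname{Im} f = \Ker(P_0 \to F)$ is projective, the whole statement reduces to controlling $\operatorname{Im} f$, equivalently $\Ker f$, in terms of $\I_X(\T[1])$.

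The computational core is the identity $\I_X(\T[1]) = 0 \iff \Ker f = 0$. First I would describe every composite $T_1'[1] \xrightarrow{a} X \xrightarrow{b} T_2'[1]$ (with $T_1', T_2' \in \T$) using the triangle: applying $\Hom_\C(-,T_2'[1])$ and using $\Hom_\C(T_0, T_2'[1]) = 0$ shows $b = c[1]\circ w$ for some $c \colon T_1 \to T_2'$, while $w \circ a = d[1]$ for a unique $d \colon T_1' \to T_1$, which satisfies $g \circ d = 0$ since $g[1]\circ w = 0$. Thus $b \circ a = (c\circ d)[1]$, and these composites exhaust $\I_X(\T[1])$. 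Conversely, the $d$ arising in this way are exactly those with $g \circ d = 0$ (by the long exact sequence), so taking $c = \operatorname{id}_{T_1}$ shows that $\I_X(\T[1]) = 0$ forces every such $d$ to vanish, i.e. $g_\ast$ is injective on $\T$, which is precisely $\Ker f = 0$; the reverse implication is then immediate.

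With this in hand both directions follow. If $\I_X(\T[1]) = 0$ then $\Ker f = 0$, so $\operatorname{Im} f \cong P_1$ is projective and $\pd F \le 1$. For the converse, suppose $\pd F \le 1$, so $\operatorname{Im} f$ is projective and $0 \to \Ker f \to P_1 \to \operatorname{Im} f \to 0$ splits; then $\Ker f$ is a projective direct summand of $P_1 = \left.\Hom_\C(-,T_1)\right|_\T$, hence of the form $\left.\Hom_\C(-,T_L)\right|_\T$ for a summand $T_L$ of $T_1$, and the composite $\Ker f \hookrightarrow P_1 \xrightarrow{f} P_0$ being zero forces $g|_{T_L} = 0$ by faithfulness of Yoneda on $\T$. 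The decisive step, and the one I expect to be the main obstacle, is to then split the triangle along $T_1 = T_L \oplus T_1''$: since $g$ vanishes on $T_L$, the cone decomposes as $X \cong T_L[1] \oplus X''$, exhibiting $T_L[1]$ as a summand of $X$ in $\T[1]$. The hypothesis that $X$ has no summands in $\T[1]$ then forces $T_L = 0$, i.e. $\Ker f = 0$, whence $\I_X(\T[1]) = 0$.
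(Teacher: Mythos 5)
Your argument is correct, and its second half takes a genuinely different route from the paper's. For sufficiency the two proofs are essentially the same: both show that $\H(g)$ is a monomorphism by producing, from the data, a composite through $X$ landing in $T_1[1]$ and invoking $\I_X(\T[1])=0$ (the paper, in \cref{pd1onlyif}, precomposes $f[1]\colon X\to T_1[1]$ with a right $\T$-approximation of $X[-1]$; you lift $d[1]$ along $w$ using $gd=0$ --- interchangeable devices). For necessity, however, the paper starts from an abstract length-one projective resolution $0\to\H T_1\to\H T_0\to\H X\to 0$, forms the cone $Y$ of the corresponding morphism in $\T$, invokes the equivalence $(\T\ast\T[1])/\T[1]\simeq\mod\T$ of \cref{equivmodt} and its corollary to realise $X$ as a direct summand of $Y$, reduces to $\I_Y(\T[1])=0$ via \cref{fi:ds}, and kills an arbitrary composite $T[1]\to Y\to T'[1]$ by a morphism-of-triangles diagram chase. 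You instead stay with the given presentation: Schanuel's lemma makes the syzygy $\operatorname{Im}\H(g)$ projective, the kernel then splits off as $\H T_L$ for a direct summand $T_L$ of $T_1$ on which $g$ vanishes, and additivity of cones exhibits $T_L[1]$ as a direct summand of $X$, which the hypothesis forbids unless $T_L=0$. Your route is more elementary and self-contained --- it needs neither the Iyama--Yoshino equivalence nor the corollary on objects with isomorphic images under $\H$, only Schanuel, splitting of idempotents in the Krull--Schmidt category $\C$, and the fact that a direct sum of triangles is a triangle --- whereas the paper trades this for the structural description of $\mod\T$ as a quotient of $\T\ast\T[1]$, a reduction it reuses elsewhere. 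Both proofs consume the hypothesis that $X$ has no direct summands in $\T[1]$ at the exactly analogous moment: the paper to discard the $T_2[1]$ in $Y\oplus T_2[1]$, you to discard $T_L[1]$.
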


The subcategory $\T$ is called cluster-tilting if $\Hom_\C(\T,X[1])=0$ is equivalent to $X\in\T$.
These subcategories are a natural generalisation of tilting objects in a cluster category.
As such, their properties have been investigated in several papers, for instance \cite{IY},\cite{KR},\cite{KR2},\cite{KZ}.
When $\T$ is a cluster-tilting subcategory of $\C$, we say that $\mod\T$ is a cluster-tilted category.
It was shown in \cite{KZ} that cluster-tilted categories are Gorenstein of dimension at most one.
Consequently, all objects have projective dimension zero, one or infinity.
Using this remark we obtain the following characterisation of the objects in $\mod\T$ having infinite projective dimension:  
\begin{cornonum}
Let $\T$ be a cluster-tilting subcategory of $\C$.
For every $X\in\C$ having no direct summands in $\T[1]$,
\[ \pd \left.\Hom_\C(-,X)\right|_\T = \infty \text{ if and only if } \I_X(\T[1])\neq 0. \]
\end{cornonum}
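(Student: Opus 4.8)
The plan is to derive the corollary from the Main Theorem by checking its hypotheses for an arbitrary $X$, and then promoting the bound $\pd\leq 1$ to a genuine dichotomy via the Gorenstein property of $\mod\T$. First I would verify that a cluster-tilting subcategory satisfies the standing assumption $\Hom_\C(\T,\T[1])=0$: substituting an object $X\in\T$ into the defining equivalence $\Hom_\C(\T,X[1])=0\iff X\in\T$ shows at once that $\T$ is rigid.

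The step I expect to carry the real content is producing, for every $X\in\C$, a triangle $T_1\to T_0\to X\to T_1[1]$ with $T_0,T_1\in\T$, since this is exactly the extra hypothesis the Main Theorem requires. Because $\T$ is contravariantly finite, I would choose a right $\T$-approximation $T_0\to X$ and complete it to a triangle $T_1\to T_0\to X\to T_1[1]$. Applying $\Hom_\C(T',-)$ for $T'\in\T$ and using that $T_0\to X$ is an approximation (so $\Hom_\C(T',T_0)\to\Hom_\C(T',X)$ is surjective) together with rigidity (so $\Hom_\C(T',T_0[1])=0$), the resulting long exact sequence forces $\Hom_\C(T',T_1[1])=0$ for all $T'\in\T$. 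The cluster-tilting property then gives $T_1\in\T$, so the triangle has the required form. Everything after this point is formal.

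With the hypotheses in place, the Main Theorem applies to every $X$ having no direct summands in $\T[1]$ and yields $\pd\left.\Hom_\C(-,X)\right|_\T\leq 1$ if and only if $\I_X(\T[1])=0$. To finish, I would invoke the result of \cite{KZ} that $\mod\T$ is Gorenstein of dimension at most one, so that every object of $\mod\T$ has projective dimension $0$, $1$, or $\infty$. In particular $\pd\left.\Hom_\C(-,X)\right|_\T=\infty$ is equivalent to the negation of $\pd\left.\Hom_\C(-,X)\right|_\T\leq 1$. Taking the contrapositive of the equivalence supplied by the Main Theorem then gives $\pd\left.\Hom_\C(-,X)\right|_\T=\infty$ if and only if $\I_X(\T[1])\neq 0$, which is the desired statement.
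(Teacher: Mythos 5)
Your proposal is correct and follows essentially the same route as the paper: the paper's proof also reduces to \cref{pd1iff} after noting that $\C=\T\ast\T[1]$ (cited there as Iyama--Yoshino's theorem), and then invokes \cref{KZ:4.3} to turn $\pd>1$ into $\pd=\infty$. The only difference is that you prove the decomposition $\C=\T\ast\T[1]$ directly from the approximation triangle and the cluster-tilting condition instead of citing it, and that argument is sound.
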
 

In the last section, we explore some partial generalisations of these results when the subcategory $\T$ satisfies $\Hom_\C(\T,\T[i])=0$ for every $0<i<n$, or when $\T$ is $n$-cluster-tilting.

The author wishes to thank Louis Beaudet and Thomas Br\"{u}stle for several interesting discussions.

%
\section{Preliminaries}
%

Let $\C$ be a $\Hom$-finite Krull-Schmidt triangulated category.
Let $\T$ be a full subcategory of $\C$ closed under isomorphisms, direct sums and direct summands.
The subcategory $\T$ is said to be \emph{$n$-rigid} if
\[ \Hom_\C(\T,\T[i])=0 \text{ for all } 0 < i < n. \]
A $2$-rigid subcategory will be called \emph{rigid}.
We say that $\T$ is \emph{$m$-strong} if 
\[ \Hom_\C(\T[i],\T)=0 \text{ for all } 0 < i < m. \]
Note that this is equivalent to $\Hom_\C(\T,\T[-i])=0$ for all $0 < i < m$.

A morphism $f:T\to X$ with $T\in\T$ and $X\in\C$ is called a \emph{right $\T$-approximation of $X$} if 
\[ \xymatrix{ \left.\Hom_\C(-,T)\right|_\T \ar[rr]^-{\Hom_\C(-,f)} && \left.\Hom_\C(-,X)\right|_\T \ar[r] & 0 } \]
is exact as functors on $\T$.
We say that $\T$ is a \emph{contravariantly finite subcategory of $\C$} if every $X\in\C$ admits a right $\T$-approximation.
Left $\T$-approximations and covariantly finite subcategories are defined dually.
A subcategory $\T$ of $\C$ is called \emph{functorially finite} if it is both contravariantly finite and covariantly finite.

Let $\X$ and $\Y$ be subcategories of $\C$. Denote by $\X\ast\Y$ the full subcategory of $\C$ formed by all objects $Z$ appearing
in a triangle
\[ \xymatrix{ X \ar[r] & Z \ar[r] & Y \ar[r] & X[1] } \]
with $X\in\X$ and $Y\in\Y$.
Although $\X\ast\Y$ is not necessarily closed under direct summands, this is the case when $\Hom_\C(\X,\Y)=0$ (see \cite[Proposition 2.1]{IY}).
\begin{lemma}\label{starnohom}
Let $\T,\X$ and $\Y$ be subcategories of $\C$. If $\Hom_\C(\T,\X)=0$ and $\Hom_\C(\T,\Y)=0$, then $\Hom_\C(\T,\X\ast\Y)=0$.
\end{lemma}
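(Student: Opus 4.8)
The plan is to reduce the statement to a single pair of objects and then exploit the fact that, for any $T\in\T$, the functor $\Hom_\C(T,-)$ is cohomological. Fix an arbitrary $T\in\T$ and an arbitrary $Z\in\X\ast\Y$; since $T$ ranges over all objects of $\T$ and $Z$ over all objects of $\X\ast\Y$, it suffices to show that $\Hom_\C(T,Z)=0$. By the definition of $\X\ast\Y$, there is a triangle
\[ \xymatrix{ X \ar[r] & Z \ar[r] & Y \ar[r] & X[1] } \]
with $X\in\X$ and $Y\in\Y$.

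Next I would apply the homological functor $\Hom_\C(T,-)$ to this triangle. Because $\C$ is triangulated, this produces a long exact sequence whose three consecutive terms around $Z$ are
\[ \xymatrix{ \Hom_\C(T,X) \ar[r] & \Hom_\C(T,Z) \ar[r] & \Hom_\C(T,Y). } \]
The hypothesis $\Hom_\C(\T,\X)=0$ gives $\Hom_\C(T,X)=0$, and the hypothesis $\Hom_\C(\T,\Y)=0$ gives $\Hom_\C(T,Y)=0$. A term of an exact sequence sandwiched between two zero groups must itself be zero: exactness at the middle forces $\Hom_\C(T,Z)=0$. As $T$ and $Z$ were arbitrary, we conclude $\Hom_\C(\T,\X\ast\Y)=0$.

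I do not expect a substantive obstacle here; the argument is an immediate diagram-chase once the long exact sequence is in place. The only point requiring a little care is the correct invocation of the long exact sequence obtained by applying $\Hom_\C(T,-)$ to the defining triangle, together with the elementary observation that a term flanked by two vanishing terms in an exact sequence is itself zero. It is worth noting that the symmetric (covariant) version, in which one instead applies $\Hom_\C(-,T)$, would work the same way, which is consistent with the remark in the excerpt that $\X\ast\Y$ behaves well under a $\Hom$-vanishing hypothesis.
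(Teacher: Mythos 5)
Your proposal is correct and follows exactly the paper's own argument: take the defining triangle of $Z\in\X\ast\Y$, apply the cohomological functor $\Hom_\C(T,-)$ to obtain the exact sequence $\Hom_\C(T,X)\to\Hom_\C(T,Z)\to\Hom_\C(T,Y)$, and conclude that the middle term vanishes since both outer terms do. No differences worth noting.
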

\begin{proof}
Let $Z\in\X\ast\Y$, then there is a triangle
\[ \xymatrix{ X \ar[r]^{f} & Z \ar[r]^{g} & Y \ar[r] & X[1] } \]
with $X\in\X$ and $Y\in\Y$. 
For every $T\in\T$ we have an exact sequence
\[ \xymatrix{ \Hom_\C(T,X) \ar[r] & \Hom_\C(T,Z) \ar[r] & \Hom_\C(T,Y) } \]
\end{proof}

\subsection{Cohomological Functor} 
We denote by $\mod\T$ the category of finitely presented contravariant functors on $\T$.

Let $\T$ be a contravariantly finite subcategory of $\C$. The category $\mod\T$ is abelian and has enough projectives.
The functor $\H:\C\to\mod\T$ defined by $\H(X) = \left.\Hom_\C(-,X)\right|_\T$ induces an equivalence between $\T$ and the projective objects of $\mod\T$. Note that a morphism $f$ is a right $\T$-approximation if and only if $\H(f)$ is an epimorphism.

Let $\mathcal{A}$ be an additive category and $\mathcal{B}$ a full subcategory of $\mathcal{A}$ closed under isomorphisms, direct sums and direct summands. The quotient category $\mathcal{A}/\mathcal{B}$ has the same objects as $\mathcal{A}$ and $\Hom_{\mathcal{A}/\mathcal{B}}(X,Y)$ is the quotient of $\Hom_{\mathcal{A}}(X,Y)$ by the subgroup of morphisms factoring thorough some object in $\mathcal{B}$.
\begin{proposition}[{\cite[Proposition 6.2(3)]{IY}}]\label{equivmodt}
Let $\T$ be a rigid subcategory of $\C$.
The functor $\H$ induces an equivalence $\left(\T\ast\T[1]\right)/\T[1] \tilde{\longrightarrow}\mod\T$.
\end{proposition}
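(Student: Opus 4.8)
The plan is to check that $\H$ carries $\T\ast\T[1]$ into $\mod\T$, annihilates morphisms factoring through $\T[1]$, and that the resulting functor $\bar\H\colon(\T\ast\T[1])/\T[1]\to\mod\T$ is dense, full and faithful. The starting observation is that any $Z\in\T\ast\T[1]$ sits, after rotation, in a triangle $T_1\xrightarrow{g}T_0\xrightarrow{a}Z\xrightarrow{h}T_1[1]$ with $T_0,T_1\in\T$. Applying $\Hom_\C(T',-)$ for $T'\in\T$ and using $\Hom_\C(T',T_1[1])=0$ (rigidity), I would obtain an exact sequence $\H(T_1)\xrightarrow{\H(g)}\H(T_0)\to\H(Z)\to 0$. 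Since $\H$ restricts to an equivalence between $\T$ and the projectives of $\mod\T$, this is a projective presentation, so in particular $\H(Z)\in\mod\T$. That $\H$ descends to the quotient is immediate: any morphism factoring through some $T''[1]\in\T[1]$ is sent to a morphism factoring through $\H(T''[1])=\left.\Hom_\C(-,T''[1])\right|_\T=0$, again by rigidity.

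For density, I would start from an arbitrary $F\in\mod\T$ and choose a projective presentation $\H(T_1)\xrightarrow{p}\H(T_0)\to F\to 0$, which exists because $\mod\T$ has enough projectives and, as recalled above, every projective is $\H(T)$ for some $T\in\T$ (using that $\T$ is closed under sums and summands). As $\H$ is fully faithful on $\T$, we have $p=\H(g)$ for a unique $g\colon T_1\to T_0$. Completing $g$ to a triangle $T_1\xrightarrow{g}T_0\to Z\to T_1[1]$ places $Z$ in $\T\ast\T[1]$, and the first paragraph identifies $\H(Z)$ with $\Coker\H(g)\cong F$.

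For fullness, given $Z,Z'\in\T\ast\T[1]$ with triangles $T_1\xrightarrow{g}T_0\xrightarrow{a}Z\to T_1[1]$ and $T_1'\xrightarrow{g'}T_0'\xrightarrow{a'}Z'\to T_1'[1]$, and a morphism $\phi\colon\H(Z)\to\H(Z')$, I would lift $\phi$ to a morphism of triangles. Projectivity of $\H(T_0)$ lets me lift $\phi\circ\H(a)$ along the epimorphism $\H(a')$ to some $\H(b_0)$, $b_0\colon T_0\to T_0'$; since $\H(a')\H(b_0)\H(g)=\phi\,\H(ag)=0$, the composite $\H(b_0)\H(g)$ lands in $\Ker\H(a')=\operatorname{Im}\H(g')$, and a second lift along $\H(g')$ produces $b_1\colon T_1\to T_1'$ with $\H(g'b_1)=\H(b_0g)$, hence $g'b_1=b_0g$ by faithfulness on $\T$. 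Axiom (TR3) extends the square $(b_1,b_0)$ to a morphism of triangles with third component $c\colon Z\to Z'$, and comparing the two ways of precomposing with the epimorphism $\H(a)$ shows $\H(c)=\phi$. This gives fullness of $\H$ on $\T\ast\T[1]$, hence of $\bar\H$.

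The crucial—but ultimately clean—step is faithfulness of $\bar\H$: I must show $\H(c)=0$ forces $c$ to factor through $\T[1]$. Evaluating $\H(c)=0$ at $a\colon T_0\to Z$ (with $T_0\in\T$) gives $ca=0$. Applying $\Hom_\C(-,Z')$ to the rotated triangle $T_0\xrightarrow{a}Z\xrightarrow{h}T_1[1]\to T_0[1]$ yields exactness of $\Hom_\C(T_1[1],Z')\xrightarrow{h^*}\Hom_\C(Z,Z')\xrightarrow{a^*}\Hom_\C(T_0,Z')$, so $c\in\Ker a^*=\operatorname{Im}h^*$ and therefore $c=dh$ for some $d\colon T_1[1]\to Z'$. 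Since $T_1[1]\in\T[1]$, this is precisely a factorisation through $\T[1]$; together with the easy converse already noted, $\bar\H$ is faithful, completing the proof that it is an equivalence. I expect the main technical care to lie in the fullness argument—specifically in verifying that the morphism produced by (TR3) genuinely represents $\phi$—while the conceptual heart is the short factorisation argument for faithfulness.
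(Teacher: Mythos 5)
Your proof is correct and complete: the paper itself does not prove this proposition but cites \cite[Proposition 6.2(3)]{IY}, and your argument is the standard one used there (Yoneda on $\T$, the rotated triangle $T_1\to T_0\to Z\to T_1[1]$ killed by rigidity to get projective presentations, lifting along presentations for fullness, and the long exact sequence $\Hom_\C(T_1[1],Z')\to\Hom_\C(Z,Z')\to\Hom_\C(T_0,Z')$ for faithfulness modulo $\T[1]$). No gaps.
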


The following is a variation of a well-known result on quotient categories.
A proof is included for the convenience of the reader.
\begin{corollary}
Let $\T$ be a rigid subcategory of $\C$ and let $X,Y\in\T\ast\T[1]$.
If $\H X\cong \H Y$, then there exists $T\in\T$ such that $X$ is isomorphic to a direct summand of $Y\oplus T[1]$.
\end{corollary}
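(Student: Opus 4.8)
The plan is to reduce the statement to a retraction argument inside $\C$ by passing through the equivalence of \Cref{equivmodt}. Since $\H$ restricts to an equivalence $(\T\ast\T[1])/\T[1]\xrightarrow{\sim}\mod\T$ and both $X$ and $Y$ lie in $\T\ast\T[1]$, and since an equivalence reflects isomorphisms, the assumed isomorphism $\H X\cong\H Y$ forces $X$ and $Y$ to be isomorphic in the quotient category $(\T\ast\T[1])/\T[1]$. Unwinding what an isomorphism in this quotient means, there are morphisms $f\colon X\to Y$ and $g\colon Y\to X$ in $\C$ whose classes are mutually inverse; in particular the class of $gf$ equals that of $\mathrm{id}_X$, so $gf-\mathrm{id}_X$ lies in the ideal of maps factoring through an object of $\T[1]$.

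Next I would make this factorisation explicit: write $gf-\mathrm{id}_X=\alpha_2\alpha_1$ with $\alpha_1\colon X\to T[1]$ and $\alpha_2\colon T[1]\to X$ for some $T\in\T$. The key step is then to assemble these data into a split monomorphism. Consider the morphisms $\begin{pmatrix} f \\ \alpha_1 \end{pmatrix}\colon X\to Y\oplus T[1]$ and $\begin{pmatrix} g & -\alpha_2 \end{pmatrix}\colon Y\oplus T[1]\to X$. A direct computation gives
\[ \begin{pmatrix} g & -\alpha_2 \end{pmatrix}\begin{pmatrix} f \\ \alpha_1 \end{pmatrix} = gf-\alpha_2\alpha_1 = \mathrm{id}_X, \]
so $X$ is a retract of $Y\oplus T[1]$.

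Finally, since $\C$ is Krull--Schmidt, it is idempotent complete, so the idempotent endomorphism $\begin{pmatrix} f \\ \alpha_1 \end{pmatrix}\begin{pmatrix} g & -\alpha_2 \end{pmatrix}$ of $Y\oplus T[1]$ splits and realises $X$ as a direct summand of $Y\oplus T[1]$, as required.

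I expect the main subtlety to be bookkeeping rather than conceptual. The first point to notice is that only the one-sided relation $gf\equiv\mathrm{id}_X$ is needed; the companion relation $fg\equiv\mathrm{id}_Y$ coming from the inverse in the quotient is never used. The second is that the object through which $gf-\mathrm{id}_X$ factors can be taken of the form $T[1]$ with $T\in\T$, which is exactly the shape demanded by the conclusion; this is automatic because the quotient is formed precisely with respect to morphisms factoring through $\T[1]$, and such a morphism factors through a finite direct sum of objects of $\T[1]$, hence through a single object $T[1]$ with $T\in\T$. The splitting at the very end relies only on the Krull--Schmidt hypothesis on $\C$.
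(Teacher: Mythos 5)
Your proposal is correct and follows essentially the same route as the paper: both pass through the equivalence of \cref{equivmodt} to produce $f\colon X\to Y$ and $g\colon Y\to X$ with $\mathbb{I}_X-gf$ factoring through a single object $T[1]$ (using that $\T$ is closed under direct sums), and then exhibit an explicit retraction of $Y\oplus T[1]$ onto $X$, the only difference being a sign convention and your explicit mention of idempotent completeness at the end.
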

\begin{proof}
Let $\H(f):\H X\to \H Y$ be an isomorphism. There exists $g:Y\to X$ such that $\H(g)\H(f) = \mathbb{I}_{\H X}$.
Then $\H(\mathbb{I}_X - gf) = 0$ hence $\mathbb{I}_X - gf$ factors through an object in $\T[1]$.
Thus, there are morphisms $\alpha:X\to T[1]$ and $\beta:T[1]\to X$ with $T\in\T$ such that $\mathbb{I}_X - gf = \beta\alpha$.
We have
\[ \left[\begin{array}{cc} g & \beta \end{array}\right] \left[\begin{array}{c} f \\ \alpha \end{array}\right]
  = gf + \beta\alpha = \mathbb{I}_X \]
Therefore $\left[\begin{smallmatrix} f \\ \alpha \end{smallmatrix}\right]:X\to Y\oplus T[1]$ is a section.
\end{proof}

\subsection{Cluster-Tilting Subcategories} 
A subcategory $\T$ of $\C$ is called an \emph{$n$-cluster-tilting subcategory} if it satisfies the following conditions:
\begin{defenum}
  \item $\T$ is functorially finite.
  \item $X\in\T$ if and only if $\Hom_\C(\T,X[i])=0$ for all $0 < i < n$.
  \item $X\in\T$ if and only if $\Hom_\C(X,\T[i])=0$ for all $0 < i < n$.
\end{defenum}
A $2$-cluster-tilting subcategory will simply be called \emph{cluster-tilting}.

\begin{lemma}[{\cite[Lemma 3.2]{KZ}}]
Let $\T$ be contravariantly finite. If
\[ \Hom_\C(\T,X[1])=0 \text{ if and only if } X\in\T \]
then $\T$ is cluster-tilting.
\end{lemma}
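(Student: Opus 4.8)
The plan is to verify the three defining conditions of a cluster-tilting subcategory directly. Condition~(2) is precisely the hypothesis, so the task reduces to establishing condition~(3) and to upgrading contravariant finiteness to functorial finiteness by producing left $\T$-approximations. I would first record that $\T$ is rigid: applying the hypothesis to an object $X\in\T$ forces $\Hom_\C(\T,X[1])=0$, and letting $X$ range over $\T$ gives $\Hom_\C(\T,\T[1])=0$.

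The technical heart is a single construction. Given any $Z\in\C$, choose a right $\T$-approximation $g\colon T_0\to Z$ and complete it to a triangle
\[ \xymatrix{ T_1 \ar[r] & T_0 \ar[r]^{g} & Z \ar[r] & T_1[1]. } \]
Applying $\Hom_\C(T,-)$ for $T\in\T$ to the rotated triangle and using that $g$ is a right approximation (so $\Hom_\C(T,T_0)\to\Hom_\C(T,Z)$ is surjective) together with rigidity (so $\Hom_\C(T,T_0[1])=0$), the long exact sequence forces $\Hom_\C(T,T_1[1])=0$. Hence $\Hom_\C(\T,T_1[1])=0$, and condition~(2) yields $T_1\in\T$. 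In particular $Z\in\T\ast\T[1]$, so this shows $\C=\T\ast\T[1]$.

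For condition~(3) the forward implication is rigidity again. For the converse, suppose $\Hom_\C(X,\T[1])=0$ and take the triangle $T_1\to T_0\to X\xrightarrow{w}T_1[1]$ just produced, with $T_0,T_1\in\T$. Since $T_1\in\T$ and $\Hom_\C(X,\T[1])=0$, the connecting morphism $w$ vanishes, so the triangle splits and $T_0\cong X\oplus T_1$. As $\T$ is closed under direct summands, $X\in\T$, as required.

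Finally, for covariant finiteness I would feed not $Y$ but the shift $Y[1]$ into the construction: a right $\T$-approximation of $Y[1]$ completes to a triangle $T_1\to T_0\to Y[1]\to T_1[1]$ with $T_1\in\T$ by the step above. Rotating this triangle backwards produces
\[ \xymatrix{ Y \ar[r]^{u} & T_1 \ar[r] & T_0 \ar[r] & Y[1], } \]
and applying $\Hom_\C(-,S)$ for $S\in\T$ shows, via rigidity ($\Hom_\C(T_0,S[1])=0$), that $\Hom_\C(T_1,S)\to\Hom_\C(Y,S)$ is surjective; thus $u$ is a left $\T$-approximation of $Y$. This gives covariant finiteness and completes functorial finiteness. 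The main obstacle is recognising that left approximations are not obtained directly but by running the right-approximation machinery on the shifted object $Y[1]$ and rotating; the triangulated structure is exactly what converts contravariant finiteness into its covariant counterpart here, so the whole argument hinges on the lemma that the third term of a right-approximation triangle lands in $\T$.
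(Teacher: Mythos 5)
The paper does not prove this lemma; it is quoted directly from Koenig--Zhu \cite[Lemma 3.2]{KZ}, so there is no internal proof to compare against. Your argument is correct and is essentially the standard one: rigidity from the hypothesis, the cone of a right $\T$-approximation landing in $\T$ (giving $\C=\T\ast\T[1]$), the splitting of the approximation triangle for condition~(3), and the shift-and-rotate trick to extract left $\T$-approximations from right ones. Each step checks out, including the two long-exact-sequence computations and the use of closure of $\T$ under direct summands.
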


\begin{theorem}[{\cite[Theorem 3.1]{IY}}]
Let $\T$ be an $n$-cluster-tilting subcategory of $\C$. Then
$\C = \T\ast\T[1]\ast\cdots\ast\T[n-1]$. 
\end{theorem}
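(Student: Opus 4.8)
The plan is to prove the decomposition by iterating right $\T$-approximations so as to strip $X$ down, one shift at a time, to an object that lies in $\T$, and then to reassemble the resulting chain of triangles using the associativity of $\ast$. Concretely, set $X_0 = X$ and, using that $\T$ is contravariantly finite (part of condition (1)), choose at each stage a right $\T$-approximation $T_k \to X_k$ and complete it to a triangle
\[ \xymatrix{ X_{k+1} \ar[r] & T_k \ar[r] & X_k \ar[r] & X_{k+1}[1] } \]
for $k = 0, 1, \ldots, n-2$. This produces a chain of $n-1$ triangles whose middle terms $T_0, \ldots, T_{n-2}$ lie in $\T$, and the whole argument reduces to controlling where the remainders $X_k$ sit.

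The technical heart is a $\Hom$-degree computation showing that each approximation step kills the lowest obstruction and shifts the remaining ones down by one. Applying $\Hom_\C(T,-)$ to the $k$-th triangle for $T\in\T$ and using that $\T$ is $n$-rigid (which follows from the forward direction of condition (2), so that $\Hom_\C(T,T_k[i])=0$ for $0<i<n$), I would read off two facts from the long exact sequence. First, because $T_k\to X_k$ is a right $\T$-approximation the map $\Hom_\C(T,T_k)\to\Hom_\C(T,X_k)$ is surjective, which forces
\[ \Hom_\C(T,X_{k+1}[1]) = 0. \]
Second, for $2\le i\le n-1$ both neighbouring $T_k$-terms vanish by rigidity, giving isomorphisms
\[ \Hom_\C(T,X_{k+1}[i]) \cong \Hom_\C(T,X_k[i-1]). \]
Iterating these from $X_0=X$, one checks by induction that $\Hom_\C(\T,X_k[i])=0$ for $1\le i\le k$, while the surviving groups satisfy $\Hom_\C(\T,X_k[i])\cong\Hom_\C(\T,X[i-k])$ for $k<i\le n-1$; thus after $n-1$ steps the obstructions are pushed off the top and $\Hom_\C(\T,X_{n-1}[i])=0$ for all $0<i<n$. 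The backward direction of condition (2) then yields $X_{n-1}\in\T$.

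Finally I would splice the chain. Rotating the $k$-th triangle exhibits $X_k$ as sitting in a triangle with first term $T_k\in\T$ and third term $X_{k+1}[1]$, so a descending induction on $j$ shows $X_{n-1-j}\in\T\ast\T[1]\ast\cdots\ast\T[j]$: the base case $j=0$ is $X_{n-1}\in\T$, and the inductive step combines $T_{n-2-j}\in\T$ with $X_{n-1-j}[1]\in(\T\ast\cdots\ast\T[j])[1]=\T[1]\ast\cdots\ast\T[j+1]$ and uses associativity of $\ast$ to absorb the two factors into a single product. Taking $j=n-1$ gives $X = X_0\in\T\ast\T[1]\ast\cdots\ast\T[n-1]$, as required; the degenerate case $n=1$, where condition (2) forces $\T=\C$, is immediate.

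I expect the main obstacle to be the bookkeeping in the second step: one must keep careful track of which $\Hom$-degrees fall inside the rigidity window $0<i<n$ at each of the $n-1$ stages, and verify that exactly the right boundary terms vanish so that the degree-one vanishing and the shift isomorphisms chain together correctly over all stages. The reassembly is then routine once associativity of $\ast$ (a consequence of the octahedral axiom, and compatible with the autoequivalence $[1]$) is in hand.
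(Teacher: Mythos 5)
Your proof is correct: the inductive $\Hom$-degree bookkeeping checks out (the degree-one vanishing uses both the surjectivity from the approximation and $\Hom_\C(T,T_k[1])=0$ from rigidity, which you have), and the splicing via associativity of $\ast$ is sound. The paper itself does not prove this statement but cites it from Iyama--Yoshino, and your argument is essentially their original proof: iterated right $\T$-approximation triangles, showing $X_{n-1}\in\T$ via condition (2), then reassembling.
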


Let $\A$ be an abelian category having enough projectives and enough injectives.
The category $\A$ is said to be \emph{Gorenstein of dimension $n$} if
the maximum of the injective dimensions of projectives and the projective dimensions
of injectives is equal to $n$.
\begin{theorem}[{\cite[Theorem 4.3]{KZ}}]\label{KZ:4.3}
Let $\T$ be a cluster-tilting subcategory of $\C$.
The category $\mod\T$ is Gorenstein of dimension at most one.
\end{theorem}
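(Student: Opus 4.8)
The plan is to verify directly the two inequalities packaged in the Gorenstein condition: that every projective object of $\mod\T$ has injective dimension at most one, and every injective object has projective dimension at most one. The projective objects are the representables $\H T=\left.\Hom_\C(-,T)\right|_\T$ with $T\in\T$; using that $\C$ is $\Hom$-finite over the base field $k$, with $D=\Hom_k(-,k)$, the category $\mod\T$ also has enough injectives, its indecomposable injectives being the functors $\left.D\Hom_\C(T,-)\right|_\T$. The two inequalities are interchanged by $D$ together with the fact that the cluster-tilting axioms (2) and (3) are swapped on passing from $\C$ to $\C^{\mathrm{op}}$, where $\T$ is again cluster-tilting; hence it suffices to prove $\operatorname{id}\H T\le 1$ for all $T\in\T$. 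Since $\mod\T$ has enough projectives, this is equivalent to $\Ext^2_{\mod\T}(F,\H T)=0$ for all $F$, and by \Cref{equivmodt} together with the identity $\C=\T\ast\T[1]$ valid for cluster-tilting $\T$, every such $F$ is isomorphic to $\H X$ for some $X\in\C$.

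So fix $X\in\C$ and build a projective resolution of $\H X$ from the cluster-tilting triangles. A triangle $T_1\xrightarrow{a}T_0\xrightarrow{b}X\xrightarrow{c}T_1[1]$ with $T_0,T_1\in\T$ gives, after killing $\H(T_1[1])$ by rigidity, a projective presentation $\H T_1\xrightarrow{\H a}\H T_0\to\H X\to 0$. Rotating to $X[-1]\xrightarrow{c[-1]}T_1\xrightarrow{a}T_0$ identifies the first syzygy as the image of $\H(c[-1])$, and a triangle $S_1\xrightarrow{a'}S_0\xrightarrow{b'}X[-1]\to S_1[1]$ for $X[-1]$ then supplies the next projective term $\H S_0$ with differential $\partial_2=\H(c[-1]\circ b')$. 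Applying $\Hom_{\mod\T}(-,\H T)$ and the Yoneda identification $\Hom_{\mod\T}(\H S,\H T)=\Hom_\C(S,T)$ turns $\Ext^2_{\mod\T}(\H X,\H T)$ into the degree-two cohomology, at the spot $\Hom_\C(S_0,T)$, of a complex assembled from the maps $a$, $c[-1]\circ b'$ and $a'$ in $\C$.

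The heart of the argument, and the step I expect to be most delicate, is to show that every such cocycle is a coboundary while disposing of the a priori infinite tail of the resolution. The tail is harmless in degree two because $b'a'=0$ forces $\operatorname{im}\H a'\subseteq\ker\partial_2$, so any cocycle $\phi\colon S_0\to T$ already satisfies $\phi a'=0$ and only the two chosen triangles enter. Given such a $\phi$, the triangle for $X[-1]$ lets us write $\phi=\bar\phi\circ b'$ with $\bar\phi\colon X[-1]\to T$; rotating the original triangle once more to $T_0[-1]\xrightarrow{b[-1]}X[-1]\xrightarrow{c[-1]}T_1$, the composite $\bar\phi\circ b[-1]$ lies in $\Hom_\C(T_0[-1],T)\cong\Hom_\C(T_0,T[1])$, which vanishes by rigidity. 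Hence $\bar\phi$ factors through $c[-1]$, say $\bar\phi=\psi\circ c[-1]$, and so $\phi=\psi\circ(c[-1]\circ b')$ is a coboundary. This yields $\Ext^2_{\mod\T}(\H X,\H T)=0$ for all $X$, hence $\operatorname{id}\H T\le 1$; the complementary bound $\pd I\le 1$ for injectives $I$ follows by the same computation carried out in $\C^{\mathrm{op}}$, and together these show that $\mod\T$ is Gorenstein of dimension at most one.
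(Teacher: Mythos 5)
The paper itself gives no proof of this statement --- it is imported from \cite{KZ} --- so I am judging your argument on its own terms and against the source. The half of your argument that bounds the injective dimension of the projectives is correct and complete: the computation of $\Ext^2_{\mod\T}(\H X,\H T)=0$ for $X\in\C=\T\ast\T[1]$ and $T\in\T$ is carried out properly. In particular you deal with the tail of the resolution correctly (a degree-two cocycle $\phi$ vanishes on $\ker\partial_2\supseteq\operatorname{im}\H(a')$, hence $\phi a'=0$ by faithfulness of $\H$ on $\T$), and the two factorisations --- first through $b'$ via the triangle for $X[-1]$, then through $c[-1]$ using $\Hom_\C(T_0[-1],T)\cong\Hom_\C(T_0,T[1])=0$ --- are exactly right. (Minor slip: the image of $\H(c[-1])$ is the second syzygy, namely $\ker\H(a)$, not the first; this affects nothing.)

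The gap is in the other half. You dispose of ``$\pd I\le 1$ for every injective $I$'' by asserting that $\mod\T$ has enough injectives, that they are the functors $\left.D\Hom_\C(T,-)\right|_\T$, and that $D$ interchanges the two Gorenstein inequalities between $\mod\T$ and $\mod(\T^{\mathrm{op}})$. For a cluster-tilting \emph{subcategory} $\T$ (possibly with infinitely many indecomposables) of a $\Hom$-finite Krull--Schmidt triangulated category, none of this is free: the functors $\left.D\Hom_\C(T,-)\right|_\T$ are injective among \emph{all} contravariant functors, but one must show that they are finitely presented and that every object of $\mod\T$ embeds into a finite sum of them --- that is, that $\T$ is a dualizing $k$-variety. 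This is not among the hypotheses, and it is itself part of the content of the theorem, since the definition of ``Gorenstein'' used in the paper already presupposes enough injectives. (Note also that ``$\operatorname{id}$ of projectives $\le 1$'' does not formally imply ``$\pd$ of injectives $\le 1$'' inside the same category, so the injective half cannot simply be waved away.) In \cite{KZ} this is where the remaining work lies: the injective objects of $\mod\T\simeq\C/\T[1]$ are identified as the $\H(T[2])$, $T\in\T$, using covariant finiteness of $\T$; once that is known, the bound on their projective dimension is immediate, since a triangle $T_1\to T_0\to T[2]\to T_1[1]$ yields $0\to\H T_1\to\H T_0\to\H(T[2])\to 0$ because $\H(T[1])=0$. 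Your proof becomes complete if you either establish the dualizing-variety property in this generality or replace the appeal to $D$ by this direct identification of the injectives.
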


%
\section{Factorisation Ideals}\label{fi}
%

Let $\X$ and $\Y$ be subcategories of a category $\C$.
For every $X\in\X$, $Y\in\Y$ and $M\in\C$, we define a subset $\I_{M}(X,Y)$ of $\Hom_\C(X,Y)$ by
\[\I_{M}(X,Y) = \set{f:X\to Y}{f \text{ factors through } M}\]
Write $\I_{M}(\X,\Y)=0$ precisely when $\I_{M}(X,Y)=0$ for every $X\in\X$ and $Y\in\Y$.

When $\X=\Y$ we simply write $\I_{M}(\X)$ instead of $\I_{M}(\X,\X)$.
In this case, $\I_{M}(\X)$ is an ideal of the category $\X$.

\begin{lemma}\label{fi:ds}
Let $M$ and $N$ be objects in $\C$. If $M$ is isomorphic to a direct summand of $N$, then
$\I_{M}(\X,\Y) \subseteq \I_{N}(\X,\Y)$.
\end{lemma}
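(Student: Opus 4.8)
The plan is to unwind the two definitions and exploit the splitting maps witnessing that $M$ is a direct summand of $N$. Since $M$ is isomorphic to a direct summand of $N$, there are morphisms $\iota:M\to N$ and $\pi:N\to M$ with $\pi\iota=\mathbb{I}_M$. First I would fix arbitrary objects $X\in\X$ and $Y\in\Y$ and an arbitrary morphism $f\in\I_M(X,Y)$; by definition $f$ factors through $M$, so I can write $f=ba$ for some $a:X\to M$ and $b:M\to Y$.

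The key step is to insert the identity $\mathbb{I}_M=\pi\iota$ between $a$ and $b$ and then regroup the composition. This gives $f=ba=b(\pi\iota)a=(b\pi)(\iota a)$, which exhibits $f$ as a morphism factoring through $N$ via the maps $\iota a:X\to N$ and $b\pi:N\to Y$. Hence $f\in\I_N(X,Y)$.

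Since $f$, $X$, and $Y$ were arbitrary, this proves $\I_M(X,Y)\subseteq\I_N(X,Y)$ for all $X\in\X$ and $Y\in\Y$, which is exactly the assertion $\I_M(\X,\Y)\subseteq\I_N(\X,\Y)$. There is no real obstacle here: the argument is essentially a one-line diagram chase, and the only point requiring any care is keeping the direction of the splitting maps straight so that the composition $(b\pi)(\iota a)$ is well-formed and indeed recovers $f$.
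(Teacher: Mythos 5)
Your proof is correct and follows essentially the same argument as the paper's: both insert the identity $\mathbb{I}_M=\pi\iota$ (the paper writes $\rho\iota$) into the factorisation through $M$ and regroup to obtain a factorisation through $N$. No issues.
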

\begin{proof}
Suppose that $M$ is isomorphic to a direct summand of $N$.
Let $\iota: M \to N$ denote the corresponding section with retraction $\rho:N \to M$.
For any $X\in\X$ and $Y\in\Y$, let
\[ \xymatrix{ X \ar[r]^{\alpha} & M \ar[r]^-{\beta} & Y} \] be an element of $\I_{M}(X,Y)$.
We consider the composition $\beta\rho \circ \iota\alpha$
\[ \xymatrix{X \ar[r]^{\alpha}\ar[rd]_{\iota\alpha} & M \ar[r]^-{\beta}\ar@<-2pt>[d]_{\iota}     & Y \\
                                                    & N \ar[ru]_{\beta\rho} \ar@<-2pt>[u]_{\rho} &   \\ } \]
We have $\beta\alpha = \beta\rho\iota\alpha$ since $\rho\iota = \mathbb{I}_{M}$. Thus $\beta\alpha\in\I_{N}(X,Y)$.
\end{proof}

Let $\T$ be a contravariantly finite subcategory of $\C$. For any $X\in\C$, let $g_0:T_0\to X$ be a right $\T$-approximation of $X$.
Complete $g_0$ to a triangle
\[ \xymatrix{ \Omega^{1}X \ar[r] & T_0 \ar[r]^-{g_0} & X \ar[r] & \left(\Omega^{1}X\right)[1] } \]
Continuing inductively, we obtain a triangle for every $i\geq 0$
\[ \xymatrix{ \Omega^{i+1}X \ar[r] & T_i \ar[r]^-{g_i} & \Omega^{i}X \ar[r] & \left(\Omega^{i+1}X\right)[1] } \]
where $g_i$ is a right $\T$-approximation of $\Omega^{i}X$ and we set $\Omega^{0}X=X$.
We remark that the objects $\Omega^{i}X$ are not uniquely determined.
\begin{proposition}\label{fi:os}
Let $\D$ be a subcategory of $\C$ and let $\T$ be contravariantly finite in $\C$.
\begin{thmenum}
  \item Let $\T$ be $n$-rigid. For every $0 \leq k < n-1$ and every $i\geq 0$,
  \[ \text{if } \I_{\Omega^{i+1} X}(\D,\T[k]) = 0, \text{ then } \I_{\Omega^i X}(\D[1],\T[k+1]) = 0. \]
  \item Let $\T$ be $m$-strong. For every $0<k\leq m-1$ and every $i\geq 0$,
  \[ \text{if } \I_{\Omega^i X}(\T[k+1],\D[1]) = 0, \text{ then } \I_{\Omega^{i+1} X}(\T[k],\D) = 0. \]
\end{thmenum}
\end{proposition}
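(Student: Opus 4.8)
The plan is to prove each of the two implications by a direct diagram chase on the defining triangle
\[ \Omega^{i+1}X \to T_i \xrightarrow{g_i} \Omega^i X \xrightarrow{h} (\Omega^{i+1}X)[1], \]
paired with the long exact sequence coming from a suitable $\Hom$-functor. The two parts are formally dual: part (a) will use the contravariant functor $\Hom_\C(-,T[k+1])$ together with the $n$-rigidity assumption, while part (b) will use the covariant functor $\Hom_\C(T[k],-)$ together with the $m$-strong assumption. In each case the crux is to push a factorisation through $\Omega^i X$ back to a factorisation through a shift of $\Omega^{i+1}X$ (resp. $\Omega^i X$), where the reformulated hypothesis applies.

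For part (a), I would first note that since $[1]$ is an autoequivalence of $\C$, the hypothesis $\I_{\Omega^{i+1}X}(\D,\T[k])=0$ is equivalent to $\I_{(\Omega^{i+1}X)[1]}(\D[1],\T[k+1])=0$. Now take an arbitrary element of $\I_{\Omega^i X}(\D[1],\T[k+1])$, say $ba$ with $a\colon D[1]\to\Omega^i X$ and $b\colon\Omega^i X\to T[k+1]$ for some $D\in\D$, $T\in\T$; the goal is to show $ba=0$. Applying $\Hom_\C(-,T[k+1])$ to the triangle gives exactness of
\[ \Hom_\C((\Omega^{i+1}X)[1],T[k+1]) \to \Hom_\C(\Omega^i X,T[k+1]) \to \Hom_\C(T_i,T[k+1]), \]
where the maps are induced by $h$ and $g_i$. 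The rightmost group vanishes: since $T_i,T\in\T$ and $0<k+1<n$ (which is exactly the range $0\leq k<n-1$), $n$-rigidity gives $\Hom_\C(T_i,T[k+1])=0$. Hence $b\circ g_i=0$, so $b$ factors through the connecting morphism as $b=c\circ h$ for some $c\colon(\Omega^{i+1}X)[1]\to T[k+1]$. Then $ba=c\circ(ha)$ factors through $(\Omega^{i+1}X)[1]$, and the reformulated hypothesis forces $ba=0$.

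For part (b) I would run the dual argument. The hypothesis $\I_{\Omega^i X}(\T[k+1],\D[1])=0$ is equivalent, after shifting by $[-1]$, to $\I_{(\Omega^i X)[-1]}(\T[k],\D)=0$. Given an element $ba\in\I_{\Omega^{i+1}X}(\T[k],\D)$ with $a\colon T[k]\to\Omega^{i+1}X$ and $b\colon\Omega^{i+1}X\to D$, I would rotate the triangle to
\[ (\Omega^i X)[-1] \xrightarrow{w} \Omega^{i+1}X \xrightarrow{p} T_i \xrightarrow{g_i} \Omega^i X \]
(so $w=-h[-1]$) and apply $\Hom_\C(T[k],-)$, obtaining exactness of
\[ \Hom_\C(T[k],(\Omega^i X)[-1]) \to \Hom_\C(T[k],\Omega^{i+1}X) \to \Hom_\C(T[k],T_i), \]
induced by $w$ and $p$. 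The right-hand group vanishes by $m$-strongness, since $T[k],T_i$ satisfy $0<k<m$ (the range $0<k\leq m-1$); this forces $p\circ a=0$, so $a=w\circ a'$ factors through $w$. Consequently $ba=(b\circ w)\circ a'$ factors through $(\Omega^i X)[-1]$, and the shifted hypothesis gives $ba=0$.

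The computation is essentially a routine chase; the only points demanding care are (i) correctly transporting the hypothesis across the shift functor (by $[1]$ in part (a), by $[-1]$ in part (b)) and pairing it with the correct variance of the $\Hom$-functor, and (ii) verifying that the stated ranges for $k$ are precisely those for which the rigidity, respectively strongness, assumption annihilates the middle group $\Hom_\C(T_i,T[k+1])$, respectively $\Hom_\C(T[k],T_i)$. I expect (ii) to be the main place to slip, since an off-by-one in the bounds on $k$ would leave that group nonzero and break the chase.
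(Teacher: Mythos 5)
Your proof is correct and follows essentially the same route as the paper: kill $\beta g_i$ (resp. $p\circ a$) using the rigidity/strongness hypothesis, factor through the connecting morphism of the defining triangle, and shift the resulting factorisation to land in the vanishing ideal. The only cosmetic difference is that the paper packages the factorisation step as a morphism of triangles rather than a long exact $\Hom$-sequence, and it omits the details of part (b), which you supply correctly.
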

\begin{proof}
We will only prove (a), the proof of (b) is similar.
Let $\alpha:D[1]\to \Omega^i X$ and $\beta:\Omega^i X\to T[k+1]$ be any morphisms with $D\in\D$ and $T\in\T$.
There is a triangle
\[ \xymatrix{ \Omega^{i+1} X \ar[r] & T_i \ar[r]^-{g_i} & \Omega^i X \ar[r]^-{h_i} & \left(\Omega^{i+1} X\right)[1] } \]
with $T_i\in\T$. We have $\beta g_i \in \Hom_\C(T_i,T[k+1])$, thus $\beta g_i=0$ since $\T$ is $n$-rigid and $0 < k+1 < n$.
Hence there exists $u:\left(\Omega^{i+1} X\right)[1]\to T[k+1]$ making the following diagram commutative:
\[ \xymatrix{
D \ar[d]^{h_i\alpha[-1]}\ar[r] & 0 \ar[d]\ar[r] & D[1] \ar[d]^{\alpha} \ar@{=}[r] & D[1] \ar[d]^{h_i\alpha} \\
\Omega^{i+1} X \ar[r]\ar[d]^{u[-1]} & T_i \ar[d]\ar[r]^{g_i} & \Omega^i X \ar[d]^{\beta}\ar[r]^-{h_i} & \left(\Omega^{i+1} X\right)[1] \ar[d]^u \\
T[k] \ar[r] & 0 \ar[r] & T[k+1] \ar@{=}[r] & T[k+1] \\
} \]
Since $(uh_i\alpha)[-1]\in\I_{\Omega^{i+1} X}(\D,\T[k]) = 0$, we conclude that $\beta\alpha=0$. 
\end{proof}

\begin{corollary}\label{fiomegacor}
Let $\T$ be a contravariantly finite subcategory of $\C$.
\begin{thmenum}
  \item Let $\T$ be $n$-rigid.
  If $\I_{\Omega^{n-2} X}(\T[1]) = 0$, then $\I_{X}(\T[n-1]) = 0$.
  \item Let $\T$ be $m$-strong.
  If $\I_{X}(\T[m]) = 0$, then $\I_{\Omega^{m-1} X}(\T[1]) = 0$.
\end{thmenum}
\end{corollary}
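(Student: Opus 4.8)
The plan is to deduce both statements from \cref{fi:os} by iterating it, since each part of the corollary is exactly the ``telescoped'' composite of repeated applications of the corresponding part of the proposition. Indeed, \cref{fi:os}(a) lets one trade a vanishing factorisation ideal through $\Omega^{i+1}X$ for one through $\Omega^i X$, at the cost of shifting both the domain and the codomain up by one; part (b) does the reverse. Both conclusions are then obtained by running this exchange down (resp.\ up) the sequence of syzygies $\Omega^i X$, starting from the hypothesis and stopping when the factoring object reaches $X$ (resp.\ $\Omega^{m-1}X$).

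For part (a), I would prove by induction on $j$, for $0 \le j \le n-2$, the statement
\[ \I_{\Omega^{n-2-j}X}(\T[1+j]) = 0. \]
The base case $j=0$ is precisely the hypothesis $\I_{\Omega^{n-2}X}(\T[1]) = 0$. For the inductive step I apply \cref{fi:os}(a) with $\D = \T[1+j]$, $k = 1+j$ and $i = n-3-j$: the hypothesis $\I_{\Omega^{i+1}X}(\D, \T[k]) = \I_{\Omega^{n-2-j}X}(\T[1+j]) = 0$ yields $\I_{\Omega^i X}(\D[1], \T[k+1]) = \I_{\Omega^{n-3-j}X}(\T[2+j]) = 0$, which is the statement for $j+1$. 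The conditions of the proposition hold at each step, since $i = n-3-j \ge 0$ and $0 \le k = 1+j < n-1$ exactly for $0 \le j \le n-3$. Taking $j = n-2$ gives $\I_X(\T[n-1]) = 0$.

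Part (b) is entirely dual. Here I would prove by induction on $j$, for $0 \le j \le m-1$, that
\[ \I_{\Omega^j X}(\T[m-j]) = 0, \]
with base case $j = 0$ the hypothesis $\I_X(\T[m]) = 0$. The inductive step applies \cref{fi:os}(b) with $\D = \T[m-1-j]$, $k = m-1-j$ and $i = j$, turning $\I_{\Omega^j X}(\T[m-j]) = 0$ into $\I_{\Omega^{j+1}X}(\T[m-1-j]) = 0$. The range conditions $i \ge 0$ and $0 < k \le m-1$ hold for $0 \le j \le m-2$, covering every step, and $j = m-1$ produces the desired $\I_{\Omega^{m-1}X}(\T[1]) = 0$.

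Since the real content lies in \cref{fi:os}, there is no genuine obstacle here; the only thing demanding care is the bookkeeping of the three moving indices (the syzygy index, the shift $k$, and the shift defining $\D$) and the verification that the range constraints $0 \le k < n-1$ (resp.\ $0 < k \le m-1$) and $i \ge 0$ are satisfied at every intermediate step, not merely at the endpoints. One should also note that the auxiliary subcategory $\D$ must be taken to be a shift $\T[\cdot]$ of $\T$, which is legitimate because \cref{fi:os} imposes no restriction on $\D$.
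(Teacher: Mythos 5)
Your proposal is correct and is exactly the intended argument: the paper states \cref{fiomegacor} without proof, as an immediate consequence of iterating \cref{fi:os}, and your index bookkeeping (including the verification that the range constraints on $i$ and $k$ hold at every intermediate step) checks out in both parts.
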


\begin{proposition}
Let $\T$ be an $n$-cluster-tilting subcategory of $\C$. Then
$X\in\T$ if and only if $\I_{X}(\T[-n+1]\ast\cdots\ast\T[-1],\T[1]\ast\cdots\ast\T[n-1])= 0$.
\end{proposition}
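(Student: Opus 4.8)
The plan is to reduce the statement to two $\Hom$-vanishing conditions and exploit the decomposition $\C=\T\ast\T[1]\ast\cdots\ast\T[n-1]$ from \cite[Theorem 3.1]{IY}. Throughout write $\X=\T[-n+1]\ast\cdots\ast\T[-1]$ and $\Y=\T[1]\ast\cdots\ast\T[n-1]$. First I would record three auxiliary facts. Since every subcategory contains $0$ and $\ast$ is associative, each $\T[-i]$ and each $\T[i]$ with $1\leq i\leq n-1$ occurs as a $\ast$-factor, so $\T[-i]\subseteq\X$ and $\T[i]\subseteq\Y$. Building up the vanishing across the $\ast$-products by iterating \cref{starnohom} (and its contravariant analogue, obtained from the long exact sequence in the first variable) together with conditions (3) and (2) of cluster-tilting yields
\[ X\in\T \iff \Hom_\C(X,\Y)=0 \iff \Hom_\C(\X,X)=0, \]
where the left equivalence comes from condition (3) and the right one from condition (2). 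I would also note $\Hom_\C(\X,\T)=0$: each $\Hom_\C(\T[-i],\T)\cong\Hom_\C(\T,\T[i])$ vanishes for $0<i<n$ by $n$-rigidity, and this propagates over $\X$ by the contravariant analogue of \cref{starnohom}.

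For the forward implication, suppose $X\in\T$. Then $\Hom_\C(X,\Y)=0$, so for any $A\in\X$ and $B\in\Y$ every factorisation $A\xrightarrow{f}X\xrightarrow{g}B$ has $g=0$, and hence $gf=0$. Therefore every morphism in $\I_X(\X,\Y)$ is zero, that is, $\I_X(\X,\Y)=0$.

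For the converse I would argue by contraposition. Assume $X\notin\T$; then $\Hom_\C(\X,X)\neq0$, so I fix a nonzero morphism $p\colon A\to X$ with $A\in\X$. Using $\C=\T\ast\Y$, I choose a triangle $T_0\xrightarrow{a}X\xrightarrow{b}B\to T_0[1]$ with $T_0\in\T$ and $B\in\Y$. The claim is that $bp\neq0$. Indeed, applying $\Hom_\C(A,-)$ to the triangle gives exactness of $\Hom_\C(A,T_0)\to\Hom_\C(A,X)\to\Hom_\C(A,B)$, so $bp=0$ would force $p=ap'$ for some $p'\colon A\to T_0$; but $\Hom_\C(\X,\T)=0$ forces $p'=0$ and hence $p=0$, contradicting the choice of $p$. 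Thus $bp$ is a nonzero element of $\I_X(\X,\Y)$, so $\I_X(\X,\Y)\neq0$.

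The only genuine obstacle is the converse: a nonzero $p$ and the approximation map $b$ could a priori compose to zero, and the crux of the argument is that the $n$-rigidity input $\Hom_\C(\X,\T)=0$ rules this out by preventing $p$ from being absorbed into the $\T$-part $T_0$ of $X$. The remaining work is bookkeeping, namely verifying the factor embeddings $\T[-i]\subseteq\X$ and $\T[i]\subseteq\Y$ and confirming that \cref{starnohom} applies in both variances to propagate the relevant vanishing across the $\ast$-products.
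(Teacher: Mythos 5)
Your proof is correct and follows essentially the same route as the paper: the forward direction uses $\Hom_\C(X,\T[1]\ast\cdots\ast\T[n-1])=0$ via \cref{starnohom}, and the converse uses the same triangle coming from $\C=\T\ast\T[1]\ast\cdots\ast\T[n-1]$ together with $\Hom_\C(\T[-n+1]\ast\cdots\ast\T[-1],\T)=0$, merely phrased contrapositively (you produce a nonzero element of $\I_X$ from a nonzero $p\colon A\to X$, whereas the paper assumes $\I_X=0$ and kills every such $p$).
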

\begin{proof}
For simplicity let $\Y = \T[-n+1]\ast\cdots\ast\T[-1]$ and $\Z = \T[1]\ast\cdots\ast\T[n-1]$.
Suppose that $X\in\T$. 
Since $\T$ is $n$-rigid, $\Hom_\C(X,\T[i])=0$ for each $0<i<n$. Hence $\Hom_\C(X,\Z)=0$ by \cref{starnohom}.
Thus, any factorisation
\[ \xymatrix{Y \ar[r] & X \ar[r] & Z} \] with $Y\in\Y$ and $Z\in\Z$ must be zero.

Conversely, suppose that $\I_{M}(\Y,\Z)= 0$.
Since $\T$ is $n$-cluster-tilting, we have $\C=\T\ast\T[1]\ast\cdots\ast\T[n-1]=\T\ast\Z$, thus there is a triangle
\[ \xymatrix{ Z\ar[r] & T \ar[r]^-{f} & X \ar[r]^-{g} & Z[1] } \]
with $T\in\T$ and $Z\in\Z$.
Let $h\in\Hom_\C(Y, X)$ with $Y$ any object in $\Y$.
Then the composition $gh$ is zero since $gh\in\I_{M}(\Y,\Z)$. 
We deduce the existence of a commutative diagram
\[ \xymatrix{
0 \ar[r]\ar[d] & Y \ar@{=}[r] \ar@{-->}[d]^{u} & Y \ar[r]\ar[d]^{h} & 0    \ar[d] \\
Z \ar[r]       & T \ar[r]^-{f}                 & X \ar[r]^-{g}      & Z[1]        \\ }
\]
We have $\Hom_\C(\T[-i],\T)\cong \Hom_\C(\T,\T[i])=0$ for every $0<i<n$, which implies $\Hom_\C(\Y,\T)=0$.
Since $u\in\Hom_\C(Y,T)=0$, we get $h=fu=0$.
Hence $\Hom_\C(\T[-i],X)=0$ for each $0<i<n$, from which $X\in\T$.
\end{proof}

%
\section{Rigid Subcategories}
%
Throughout this section, $\T$ is a rigid contravariantly finite subcategory of $\C$. 

\begin{proposition}\label{pd1onlyif}
Let $X\in\T\ast\T[1]$. If $\I_X(\T[1])$ is zero, then $\pd \H X \leq 1$.
\end{proposition}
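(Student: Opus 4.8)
The plan is to build an explicit projective resolution of $\H X$ of length one directly from a defining triangle for $X$. Since $X\in\T\ast\T[1]$, I would fix such a triangle and rotate it to the form $T_1\xrightarrow{f}T_0\xrightarrow{g}X\xrightarrow{h}T_1[1]$ with $T_0,T_1\in\T$, then apply the homological long exact sequence obtained from $\Hom_\C(T,-)$ for each $T\in\T$. The first observation is that rigidity forces $g$ to be a right $\T$-approximation: since $\Hom_\C(T,T_1[1])=0$, the map $g_*\colon\Hom_\C(T,T_0)\to\Hom_\C(T,X)$ is surjective, so $\H g\colon \H T_0\to \H X$ is an epimorphism onto $\H X$ from the projective $\H T_0$. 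Exactness of the long exact sequence at $\Hom_\C(T,T_0)$ gives $\Ker g_*=\operatorname{im}f_*$ for every $T$, hence $\Ker \H g=\operatorname{im}\H f$, and we obtain the exact sequence $\H T_1\xrightarrow{\H f}\H T_0\xrightarrow{\H g}\H X\to 0$ with $\H T_0,\H T_1$ projective.

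It then suffices to prove that $\H f$ is a monomorphism in $\mod\T$, for in that case $\operatorname{im}\H f\cong \H T_1$ is projective and $0\to \H T_1\xrightarrow{\H f}\H T_0\xrightarrow{\H g}\H X\to 0$ is the desired resolution of length one, giving $\pd \H X\leq 1$. This is exactly the step where the hypothesis $\I_X(\T[1])=0$ is used. I would take an arbitrary $a\colon T\to T_1$ with $T\in\T$ and $fa=0$, and exploit the backward rotation $X[-1]\xrightarrow{-h[-1]}T_1\xrightarrow{f}T_0$: applying $\Hom_\C(T,-)$ and using exactness, $fa=0$ forces a factorisation $a=(h[-1])\,b$ for some $b\colon T\to X[-1]$. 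Shifting by $[1]$ yields $a[1]=h\circ b[1]$, a morphism $T[1]\to T_1[1]$ between objects of $\T[1]$ that factors through $X$; thus $a[1]\in\I_X(T[1],T_1[1])\subseteq\I_X(\T[1])=0$. Since $[1]$ is an equivalence, $a=0$, so $f_*$ is injective for every $T$ and $\H f$ is mono.

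The one genuinely delicate point — the part I expect to be the crux — is precisely this last identification: that the kernel of $\H f$ is controlled by morphisms factoring through $X$. One has to pass to the rotated triangle and then apply the shift to recognise $a[1]$ as an element of $\I_X(\T[1])$, at which point the hypothesis applies verbatim. Everything else is the routine dictionary between right $\T$-approximations, epimorphisms in $\mod\T$, the long exact $\Hom$-sequence, and the characterisation of projectives as the objects $\H T$ with $T\in\T$.
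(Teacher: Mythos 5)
Your proof is correct and follows essentially the same route as the paper: both extract the presentation $\H T_1\to \H T_0\to \H X\to 0$ from the defining triangle and use the hypothesis $\I_X(\T[1])=0$ to kill the composite $T[1]\to X\to T_1[1]$, thereby making the first map a monomorphism. The only cosmetic difference is that the paper shows $\H$ of the connecting morphism $X[-1]\to T_1$ vanishes by precomposing with a right $\T$-approximation of $X[-1]$, whereas you lift an arbitrary element of $\Ker(\H f)$ through the rotated triangle pointwise; by exactness these are the same argument.
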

\begin{proof}
Since $X\in\T\ast\T[1]$, there is a triangle
\[ \xymatrix{ X[-1]\ar[r]^-{f} & T_1 \ar[r]^-{g} & T_0 \ar[r] & X } \]
with $T_0,T_1\in\T$. Applying $\H$ to this triangle yields an exact sequence in $\mod\T$
\[ \xymatrix{ \H X[-1] \ar[r]^-{\H(f)} & \H T_1 \ar[r]^-{\H(g)} & \H T_0 \ar[r] & \H X \ar[r] & 0 } \]
Let $h:T\to X[-1]$ be a right $\T$-approximation of $X[-1]$. The composition
\[ \xymatrix{ T[1] \ar[r]^-{h[1]} & X \ar[r]^-{f[1]} & T_1[1] } \]
is an element of $\I_X(\T[1])$, thus $fh=0$.
Then $\H(fh)=0$ and since $\H(h)$ is an epimorphism, we get $\H(f)=0$.
Hence the projective dimension of $\H X$ is at most one. 
\end{proof}

\begin{theorem}\label{pd1iff}
Let $X\in\T\ast\T[1]$ having no direct summands in $\T[1]$. Then $\pd \H X \leq 1$ if and only if $\I_X(\T[1])$ is zero.  
\end{theorem}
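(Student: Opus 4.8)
The ``if'' direction is exactly \cref{pd1onlyif}, so the plan is to establish the converse: assuming $\pd\H X\le 1$, I would deduce $\I_X(\T[1])=0$. As in \cref{pd1onlyif} I fix a triangle $X[-1]\xrightarrow{f}T_1\xrightarrow{g}T_0\xrightarrow{p}X$ with $T_0,T_1\in\T$ (available since $X\in\T\ast\T[1]$) and apply $\H$ to get the exact sequence $\H X[-1]\xrightarrow{\H(f)}\H T_1\xrightarrow{\H(g)}\H T_0\to\H X\to 0$. The whole argument then reduces to showing $\H(f)=0$, since $\H(f)=0$ already yields $\I_X(\T[1])=0$: for any $b\colon X\to B[1]$ with $B\in\T$ one has $bp=0$ by rigidity ($\Hom_\C(T_0,B[1])=0$), so $b$ factors through the connecting map $f[1]\colon X\to T_1[1]$, say $b=\bar b\circ f[1]$; then for any $a\colon A[1]\to X$ with $A\in\T$, using $f[1]\circ a=(f\circ a[-1])[1]$ and $f\circ a[-1]=\H(f)(a[-1])=0$, the composite $ba$ vanishes.

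To obtain $\H(f)=0$ from $\pd\H X\le 1$, I would note that the syzygy $\Omega^1\H X$ is the image of $\H(g)$, which must be projective. Since $\Ker\H(g)=\operatorname{im}\H(f)$ by exactness, the short exact sequence $0\to\operatorname{im}\H(f)\to\H T_1\to\operatorname{im}\H(g)\to 0$ splits, so $\operatorname{im}\H(f)$ is a projective direct summand of $\H T_1$. Invoking that $\H$ restricts to an equivalence between $\T$ and the projectives of $\mod\T$, together with the Krull--Schmidt property, I would lift this to a decomposition $T_1\cong T'\oplus T''$ in $\C$ with $\H T'=\operatorname{im}\H(f)=\Ker\H(g)$, the inclusion being $\H(\iota')$ for a split monomorphism $\iota'\colon T'\to T_1$.

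The decisive step is to feed this back into $\C$. Since $\H(g\iota')=\H(g)\H(\iota')=0$ and $\H$ is faithful on $\T$, I get $g\iota'=0$, so $g$ restricts to $0$ on $T'$ and to some $g''$ on $T''$. Completing $g''$ to a triangle $T''\xrightarrow{g''}T_0\to Y\to T''[1]$ and taking the direct sum with the trivial triangle $T'\xrightarrow{0}0\to T'[1]\to T'[1]$ realises the triangle defining $X$ as a direct sum, whence $X\cong T'[1]\oplus Y$. As $X$ has no direct summand in $\T[1]$ and $T'\in\T$, this forces $T'=0$, i.e.\ $\operatorname{im}\H(f)=0$ and $\H(f)=0$, which by the first paragraph gives $\I_X(\T[1])=0$.

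I expect the main obstacle to be precisely this last translation from $\mod\T$ back to $\C$: one must promote the homological fact that $\Ker\H(g)$ is a projective summand of $\H T_1$ into an honest summand $T'$ of $T_1$ on which $g$ vanishes, and then correctly identify the resulting cone decomposition of $X$. The splitting of the triangle along the vanishing of $g$ on $T'$ is exactly where the hypothesis ``no direct summand in $\T[1]$'' is used, and it is the condition that fails for objects of $\T[1]$ (for which $\H$ vanishes yet $\I_X(\T[1])\neq0$).
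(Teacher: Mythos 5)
Your argument is correct, and it takes a genuinely different route from the paper's. The paper treats $\pd\H X=0$ and $\pd\H X=1$ separately; in the latter case it starts from a projective resolution $0\to\H T_1\to\H T_0\to\H X\to 0$, forms the cone $Y$ of the lifted map $T_1\to T_0$, invokes the corollary to \cref{equivmodt} (if $\H X\cong\H Y$ then $X$ is a summand of $Y\oplus T[1]$) together with the no-summand hypothesis to see that $X$ is a summand of $Y$, and then proves $\I_Y(\T[1])=0$ by a direct diagram chase, transferring back via \cref{fi:ds}. You instead stay with the triangle presenting $X\in\T\ast\T[1]$, observe that $\pd\H X\le 1$ forces $\Ker\H(g)=\operatorname{im}\H(f)$ to be a projective direct summand of $\H T_1$, lift the corresponding idempotent through the equivalence $\T\simeq\operatorname{proj}(\mod\T)$ and split $T_1\cong T'\oplus T''$ by Krull--Schmidt, and conclude $X\cong T'[1]\oplus Y$, so that the hypothesis kills $T'$ and yields $\H(f)=0$; the reduction of $\I_X(\T[1])=0$ to $\H(f)=0$ in your first paragraph is the correct converse of the computation in \cref{pd1onlyif}, and your identification of the cone of $(0,g'')$ as a direct sum of cones is legitimate since cones are unique up to isomorphism. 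Your route handles both values of the projective dimension uniformly, avoids the quotient-category corollary, and in effect isolates the sharper chain of equivalences $\pd\H X\le1$ iff $\H(f)=0$ iff $\I_X(\T[1])=0$ for such $X$; the paper's route is shorter given that the corollary to \cref{equivmodt} is already on hand. Both proofs rest on the same two inputs --- the equivalence between $\T$ and the projectives of $\mod\T$, and idempotent splitting --- and both use the assumption that $X$ has no summands in $\T[1]$ at precisely analogous points.
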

\begin{proof}
Sufficiency follows directly from \cref{pd1onlyif}.

Suppose that $\pd \H X = 0$. Then $X$ belongs to $\T$, and every composition of morphisms
\[ \xymatrix{T[1] \ar[r] & X \ar[r]^-{\beta} & T'[1]} \]
with $T,T'\in\T$ must be zero since $\beta \in \Hom_C(\T,\T[1])=0$.
Therefore $\I_X(\T[1])=0$.

Now, assume $\H X$ has projective dimension $1$. There is a projective resolution
\[ \xymatrix{ 0 \ar[r] & \H T_1 \ar[r]^{\H(f)} & \H T_0 \ar[r] & \H X \ar[r] & 0} \]
in $\mod\T$ with $T_0,T_1\in\T$. Complete the morphism $f$ to a triangle
\[ \xymatrix{ T_1 \ar[r]^{f} & T_0 \ar[r] & Y \ar[r] & T_1[1] } \]
Since $T_1[1]\in\T[1]$, we get an exact sequence
\[ \xymatrix{ \H T_1 \ar[r]^{\H(f)} & \H T_0 \ar[r] & \H Y \ar[r] & 0 } \]
We conclude that $\H X$ and $\H Y$ are isomorphic in $\mod\T$.
The objects $X$ and $Y$ both belong to $\T\ast\T[1]$. Hence, in the category $\C$, $X$ is isomorphic to a direct summand of $Y\oplus T_2[1]$ for some $T_2\in\T$. Since $X$ has no direct summands in $\T[1]$, $X$ is isomorphic to a summand of $Y$.
Our aim is to show that $\I_X(\T[1])=0$ but, by \cref{fi:ds}, it is sufficient to establish $\I_Y(\T[1])=0$.
Given a factorisation
\[ \xymatrix{ T[1] \ar[r]^{\alpha} & Y \ar[r]^-{\beta} & T'[1]} \]
with $T,T'\in\T$, we will show that $\beta\alpha =0$.
We construct a commutative diagram where the rows are triangles
\[ \xymatrix{ T   \ar[r]\ar[d]^{g}     & 0   \ar[r]\ar[d] & T[1]  \ar@{=}[r]\ar[d]^{\alpha} & T[1]   \ar[d]^{g[1]} \\
              T_1 \ar[r]^{f}\ar[d]^{h} & T_0 \ar[r]\ar[d] & Y     \ar[r]\ar[d]^{\beta}      & T_1[1] \ar[d]^{h[1]} \\
              T'  \ar[r]               & 0   \ar[r]       & T'[1] \ar@{=}[r]                & T'[1]                \\ } \]
and the existence of $g$ follows from the commutativity of the upper-middle square.
Moreover, the lower-middle square commutes since $\Hom_\C(\T,\T[1])=0$, assuring the existence of the morphism $h$.
From the commutativity of this diagram we deduce that $fg=0$, and hence $\H(g)=0$ since $\H(f)$ is a monomorphism. Thus $g=0$ since $\H$ induces an equivalence on $\T$.
Therefore, $\beta\alpha = h[1] \circ g[1] = 0$.
\end{proof}

It is important to note that, by \cref{equivmodt}, every object in $\mod\T$ is isomorphic to some $\H X$ with $X\in\T\ast\T[1]$ having no direct summands in $\T[1]$.
For any $X\in\C$ define $\overline{X}$ to be the direct sum of all the indecomposable direct summands of $X$ not contained in $\T[1]$.
In particular, $\overline{X} = 0$ whenever $X\in\T[1]$.
\begin{corollary}\label{pdinfty}
Let $\T$ be a cluster-tilting subcategory of $\C$.
\begin{thmenum}
  \item\label{pdinfty1} For every $X\in\C$ having no direct summands in $\T[1]$, 
        \[\pd \H X = \infty\text{ if and only if }\I_X(\T[1])\neq 0.\]
  \item For every $X\in\C$, $\pd \H X = \infty$ if and only if $\I_{\overline{X}}(\T[1])\neq 0$.
\end{thmenum}
\end{corollary}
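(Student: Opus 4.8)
The plan is to deduce both parts formally from \cref{pd1iff} together with the Gorenstein property of $\mod\T$. The crucial preliminary observation is that, since $\T$ is cluster-tilting, \cref{KZ:4.3} says $\mod\T$ is Gorenstein of dimension at most one, so that every object has projective dimension zero, one or infinity (as already recorded in the introduction). In other words $\pd \H X \in \{0,1,\infty\}$, and consequently the negation of $\pd \H X \leq 1$ is exactly $\pd \H X = \infty$. This trichotomy is the only non-formal input, and it is precisely what converts the ``$\pd \le 1$'' criterion of \cref{pd1iff} into the ``$\pd = \infty$'' criterion we want.

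For part (a), I would first note that a cluster-tilting subcategory is $2$-cluster-tilting, so \cite[Theorem 3.1]{IY} gives $\C = \T\ast\T[1]$; in particular any $X$ with no direct summands in $\T[1]$ already lies in $\T\ast\T[1]$. Then \cref{pd1iff} applies and yields $\pd \H X \leq 1$ if and only if $\I_X(\T[1]) = 0$. Combining this with the trichotomy above, $\pd \H X = \infty$ is the negation of $\pd \H X \leq 1$, which is exactly $\I_X(\T[1]) \neq 0$. This settles (a) directly, with no further computation.

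For part (b), I would reduce to (a) by stripping off the summands in $\T[1]$. Write $X \cong \overline{X} \oplus X'$, where $X'$ is the direct sum of the indecomposable summands of $X$ lying in $\T[1]$, so that $X' \in \T[1]$ while $\overline{X}$ has no direct summand in $\T[1]$. Since $\H$ is additive, $\H X \cong \H \overline{X} \oplus \H X'$. Rigidity of $\T$ gives $\Hom_\C(\T,\T[1]) = 0$, hence $\H X' = \left.\Hom_\C(-,X')\right|_\T = 0$, so $\H X \cong \H \overline{X}$ and therefore $\pd \H X = \pd \H \overline{X}$. Applying part (a) to $\overline{X}$, which by construction has no direct summands in $\T[1]$, gives $\pd \H \overline{X} = \infty$ if and only if $\I_{\overline{X}}(\T[1]) \neq 0$, which is (b).

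The proof is thus essentially a formal assembly, and I do not expect any genuine obstacle beyond two points of care. First, one relies entirely on the Gorenstein trichotomy; without it, \cref{pd1iff} would only distinguish $\pd \le 1$ from $\pd \ge 2$. Second, in the reduction for (b) one should check that $\overline{X}$ really has no direct summand in $\T[1]$, not merely no indecomposable one: this holds because $\T[1]$ is closed under direct summands, so any summand of $\overline{X}$ in $\T[1]$ would split into indecomposable summands of $\overline{X}$ lying in $\T[1]$, contradicting the construction of $\overline{X}$.
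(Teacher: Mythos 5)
Your proposal is correct and follows essentially the same route as the paper: part (a) combines $\C=\T\ast\T[1]$, \cref{pd1iff}, and the Gorenstein trichotomy from \cref{KZ:4.3}, and part (b) reduces to (a) via $\H X\cong\H\overline{X}$. The only difference is that you spell out why $\H X\cong\H\overline{X}$ (the paper calls it clear), which is a welcome addition rather than a divergence.
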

\begin{proof}\mbox{}
\begin{thmenum}
  \item Since $\T$ is cluster-tilting, we have $\C=\T\ast\T[1]$. Thus, for every $X\in\C$ having no direct summands in $\T[1]$,
        $\pd \H X > 1$ if and only if $\I_X(\T[1])\neq 0$.
        By \cref{KZ:4.3}, $\mod\T$ is Gorenstein of dimension at most $1$, so $\pd \H X > 1$ if and only if $\pd \H X = \infty$.
  \item Let $X\in\C$. By \ref{pdinfty1}, $\pd \H \overline{X} = \infty$ if and only if $\I_{\overline{X}}(\T[1])\neq 0$.
        Clearly $\H X \cong \H \overline{X}$, thus $\pd \H \overline{X} = \infty$ if and only if $\pd \H X = \infty$.
\end{thmenum}
\end{proof}

%
\section{Some Generalisations}
%
%
When $\T$ be an $n$-cluster-tilting subcategory of $\C$ with $n>2$, the category $\mod\T$ does not, in general, have the Gorenstein property.
However, this problem can be fixed with an additional assumption.
The following is a special case of Theorem 7.5 from \cite{B}.
\begin{theorem}
Let $\T$ be an $n$-cluster-tilting subcategory of $\C$.
If $\T$ is $(n-1)$-strong, then $\mod\T$ is Gorenstein of dimension at most one.
\end{theorem}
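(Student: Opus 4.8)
The plan is to unpack the Gorenstein condition into two dimension bounds and to reduce them to the factorisation-ideal calculus developed in \cref{fi}. By definition, $\mod\T$ is Gorenstein of dimension at most one exactly when every projective object has injective dimension at most one and every injective object has projective dimension at most one. Since $\C$ is $\Hom$-finite and Krull--Schmidt, $\mod\T$ is a dualizing $k$-variety, so the $k$-duality $D$ interchanges its projectives and injectives with those of $\mod\T^{op}$ while reversing $\Ext$. Passing to $\C^{op}$ (with shift $[-1]$) turns an $n$-cluster-tilting, $(n-1)$-strong subcategory into another of the same type, and exchanges $n$-rigidity with $(n-1)$-strongness. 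I would therefore first record that these two passages carry the two halves of the Gorenstein condition into one another, so that it suffices to bound the projective dimension of every injective object of $\mod\T$.

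Next I would pin down the generic bound. The structure theorem $\C=\T\ast\T[1]\ast\cdots\ast\T[n-1]$ of \cite{IY} shows that for any $X\in\C$ the syzygy tower built from the triangles $\Omega^{i+1}X\to T_i\to\Omega^i X\to(\Omega^{i+1}X)[1]$ satisfies $\Omega^1 X\in\T\ast\cdots\ast\T[n-2]$ (the right $\T$-approximation being the $\T$-part of the decomposition, since $\Hom_\C(\T,\T[i])=0$ for $0<i<n$ by \cref{starnohom}), and inductively $\Omega^{n-1}X\in\T$. Hence $\H(\Omega^{n-1}X)$ is projective and $\pd\H X\leq n-1$ for every $X$, with no further hypothesis; already this gives that $\mod\T$ is Gorenstein of dimension at most $n-1$. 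Using the Auslander--Reiten (Serre) duality available in $\C$ one writes each indecomposable injective $D\Hom_\C(T,-)|_\T$ as $\H Y$ for a suitable $Y$, so that $\Hom_\C(-,Y)|_\T\cong D\Hom_\C(T,-)|_\T$, bringing the injective objects into the same framework.

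The real work is to collapse this length-$(n-1)$ bound to one using the $(n-1)$-strong hypothesis, and here \cref{fiomegacor} is the designed tool. For an object $Y$ representing an injective one has $\I_Y(\T[n-1])=0$, because any morphism from $\T[n-1]$ into $Y$ lands in a $\Hom$-group that $n$-rigidity forces to vanish; \cref{fiomegacor}(b) with $m=n-1$ then propagates this to $\I_{\Omega^{n-2}Y}(\T[1])=0$. Combined with \cref{pd1iff}, applied to $\Omega^{n-2}Y\in\T\ast\T[1]$, this controls the top of the resolution. To finish I would show that the intermediate syzygies $\Omega^1 Y,\dots,\Omega^{n-2}Y$ contribute nothing, that is, that strongness forces the factorisation ideals at every intermediate shift to vanish, so that the syzygy tower of $\H Y$ degenerates to a two-term projective resolution. \cref{pd1iff} then yields $\pd\H Y\leq 1$, and the duality reduction of the first paragraph upgrades this to the full Gorenstein bound.

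The main obstacle is precisely this collapse. \cref{fiomegacor}(b) transports a single vanishing statement one stretch down the tower, but to reach $\pd\leq 1$ rather than the generic $\pd\leq n-1$ one must show that \emph{every} intermediate syzygy of an injective is already projective --- equivalently, that $(n-1)$-strongness makes all the relevant $\Ext^2$ groups vanish. Establishing this uniform vanishing, rather than the isolated transport provided by \cref{fi:os}, is the crux of the argument; without the strong hypothesis the intermediate factorisation ideals genuinely persist and the Gorenstein dimension can be as large as $n-1$.
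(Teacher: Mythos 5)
There is a genuine gap, and in fact the paper itself does not prove this statement: it is quoted as a special case of Theorem~7.5 of Beligiannis \cite{B}, so there is no internal argument to compare against. Judged on its own terms, your proposal fails at two points. First, the assertion in your second paragraph that $\Omega^{n-1}X\in\T$ forces $\pd\H X\leq n-1$ ``with no further hypothesis'' is false. The triangles $\Omega^{i+1}X\to T_i\to\Omega^{i}X\to(\Omega^{i+1}X)[1]$ only give exact sequences $\H\Omega^{i+1}X\to\H T_i\to\H\Omega^{i}X\to 0$; the left-hand map need not be a monomorphism, so the $i$-th syzygy of $\H X$ in $\mod\T$ is the \emph{image} of $\H\Omega^{i}X\to\H T_{i-1}$, not $\H\Omega^{i}X$ itself, and the image of a projective need not be projective. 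For $n=2$ your claim would say that every cluster-tilted algebra is hereditary, contradicting the existence of modules of infinite projective dimension that motivates the whole paper. Consequently the ``generic bound'' of Gorenstein dimension at most $n-1$ is not obtained by your argument, and the subsequent reduction has nothing to stand on.

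Second, you correctly identify that the essential content of the theorem is the collapse from a length-$(n-1)$ resolution to a length-$1$ resolution of the injectives, but you do not carry it out: \cref{fiomegacor}(b) and \cref{pd1iff} only control $\pd\H\Omega^{n-2}Y$, and a bound on the projective dimension of the $(n-2)$-nd syzygy \emph{object} propagates upward (at best) to $\pd\H Y\leq n-1$, not to $\pd\H Y\leq 1$; the step ``the intermediate syzygies contribute nothing'' is precisely what must be proved from $(n-1)$-strongness, and your last paragraph concedes it is missing. A smaller point: passing to $\C^{op}$ (with suspension $[-1]$) preserves both $n$-rigidity and $m$-strongness rather than exchanging them, since $\Hom_\C(\T[-i],\T)\cong\Hom_\C(\T,\T[i])$; the duality reduction to the injective half of the Gorenstein condition is still reasonable, but your justification of it is not. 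As written, the proposal is an outline with its central step unproven and a false lemma in its foundation.
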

Whence we immediately get a generalisation of \cref{pdinfty}:
\begin{theorem}
Let $\T$ be an $(n-1)$-strong $n$-cluster-tilting subcategory of $\C$ and let 
$X\in\T\ast\T[1]$ having no direct summands in $\T[1]$. Then 
\[\pd \H X = \infty\text{ if and only if }\I_X(\T[1])\neq 0.\]
\end{theorem}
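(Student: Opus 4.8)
The plan is to derive the statement formally from two earlier results, mirroring the proof of \cref{pdinfty1}: the $n$-cluster-tilting hypothesis places us within the scope of \cref{pd1iff}, while the $(n-1)$-strong hypothesis supplies, through the preceding theorem, the Gorenstein property that collapses every finite projective dimension exceeding $1$ to infinity.

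First I would note that an $n$-cluster-tilting subcategory is rigid whenever $n\geq 2$. Indeed, applying the forward implication of condition (2) in the definition to each object of $\T$ yields $\Hom_\C(\T,\T[i])=0$ for all $0<i<n$, and in particular $\Hom_\C(\T,\T[1])=0$. Since $\T$ is also contravariantly finite and $X\in\T\ast\T[1]$ has no direct summands in $\T[1]$, the hypotheses of \cref{pd1iff} are satisfied, giving $\pd\H X\leq 1$ if and only if $\I_X(\T[1])=0$; contrapositively, $\pd\H X>1$ if and only if $\I_X(\T[1])\neq 0$.

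Next I would invoke the preceding theorem: as $\T$ is an $(n-1)$-strong $n$-cluster-tilting subcategory, $\mod\T$ is Gorenstein of dimension at most one. As remarked after \cref{KZ:4.3}, every object of such a category has projective dimension $0$, $1$, or $\infty$, so $\pd\H X>1$ if and only if $\pd\H X=\infty$. Composing this with the previous equivalence yields $\pd\H X=\infty$ if and only if $\I_X(\T[1])\neq 0$, as claimed.

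I do not expect a genuine obstacle, since the proof is merely the composition of two established facts. The only point deserving a word of justification is the passage from ``Gorenstein of dimension at most one'' to ``finite projective dimension is at most one'': this rests on the standard fact that the finitistic dimension of a Gorenstein abelian category coincides with its Gorenstein dimension, whence any object of projective dimension greater than $1$ must in fact have infinite projective dimension, producing the trichotomy $\{0,1,\infty\}$ invoked above.
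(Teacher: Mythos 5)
Your proof is correct and takes essentially the same route as the paper: the theorem is stated there as an immediate consequence of \cref{pd1iff} combined with the preceding Gorenstein-dimension theorem, which is precisely the composition you carry out. The only detail you add --- verifying that an $n$-cluster-tilting subcategory is rigid and contravariantly finite so that \cref{pd1iff} applies --- is a check the paper leaves implicit.
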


For $X\in\C$, let $\Omega^{i}X$ be the objects defined in section \ref{fi}.
\begin{lemma}
Let $\T$ be an $n$-rigid subcategory of $\C$. 
If $X\in\T\ast\T[1]\ast \cdots \ast \T[n-1]$, then we may choose $\Omega^{i}X$ to be in $\T\ast\T[1]\ast \cdots \ast \T[n-i-1]$ for each $0\leq i < n$.
\end{lemma}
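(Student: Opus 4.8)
The plan is to argue by induction on $i$, choosing the approximations $g_i$ so as to peel off the lowest-degree layer of the decomposition at each step. The base case $i=0$ is immediate, since $\Omega^0 X = X$ lies in $\T\ast\T[1]\ast\cdots\ast\T[n-1]$ by hypothesis. For the inductive step I would suppose that $\Omega^i X \in \T\ast\T[1]\ast\cdots\ast\T[m]$ with $m = n-i-1 \geq 1$ (so the induction runs for $0 \leq i \leq n-2$, producing admissible choices of $\Omega^1 X,\dots,\Omega^{n-1}X$), and exhibit a choice of $\Omega^{i+1}X$ lying in $\T\ast\T[1]\ast\cdots\ast\T[m-1]$.

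The engine of the argument is the observation that, for such a $Y := \Omega^i X$, the $\T$-component of its decomposition already furnishes a right $\T$-approximation. Using associativity of $\ast$ to split off the lowest layer, I would write a triangle $A \xrightarrow{u} Y \xrightarrow{v} W \to A[1]$ with $A \in \T$ and $W \in \T[1]\ast\cdots\ast\T[m]$. Because $m < n$, the $n$-rigidity of $\T$ gives $\Hom_\C(\T,\T[j])=0$ for all $1 \leq j \leq m$, so an iterated application of \cref{starnohom} yields $\Hom_\C(\T,W)=0$. Hence for any $T\in\T$ and any $\phi\colon T\to Y$ we have $v\phi=0$, and the long exact sequence obtained by applying $\Hom_\C(T,-)$ to the triangle shows that $\phi$ factors through $u$; this is precisely the assertion that $u$ is a right $\T$-approximation of $Y$. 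I may therefore take $g_i = u$ and $T_i = A$, and rotating the triangle identifies the fibre of $u$ with $W[-1]$, so that $\Omega^{i+1}X$ can be chosen equal to $W[-1]$.

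It then remains to read off the degree: since $W \in \T[1]\ast\cdots\ast\T[m]$ and the shift is a triangle equivalence distributing over $\ast$, we have $W[-1] \in \T\ast\T[1]\ast\cdots\ast\T[m-1]$, and $m-1 = n-(i+1)-1$, which closes the induction. The point requiring the most care, and the main obstacle, is verifying that $u$ is genuinely a right $\T$-approximation rather than merely a map out of $\T$; this is exactly where the constraint $m<n$ is essential, since it is what forces $\Hom_\C(\T,W)=0$ through \cref{starnohom}. I would also be careful to invoke associativity of $\ast$ (used implicitly throughout the paper) when isolating the $\T$-layer, and to phrase the conclusion as the existence of a suitable \emph{choice} of $\Omega^{i+1}X$, consistent with the earlier remark that these objects are not uniquely determined.
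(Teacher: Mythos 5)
Your proof is correct and follows essentially the same route as the paper: both arguments split off the $\T$-layer via a triangle, use \cref{starnohom} together with $n$-rigidity to see that $\Hom_\C(\T,\T[1]\ast\cdots\ast\T[m])=0$, conclude that the resulting map from the $\T$-component is a right $\T$-approximation, and then induct. The only cosmetic difference is that you state the defining triangle in unrotated form and rotate afterwards, whereas the paper writes it already rotated.
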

\begin{proof}
Since $\Omega^0 X = X \in \T\ast\T[1]\ast \cdots \ast \T[n-1]$ there is a triangle
\[
  \xymatrix{ Z_0 \ar[r] & T_0 \ar[r]^-{g_0} & X \ar[r] & Z_0[1] }
\]
with $T_0\in\T$ and $Z_0[1]\in\T[1]\ast \cdots \ast \T[n-1]$.
Our assumption that $\T$ is $n$-rigid along with \cref{starnohom} implies that $\Hom_\C(\T,\T[1]\ast \cdots \ast \T[n-1])=0$.
Thus $\Hom_\C(\T,Z_0[1])=0$ from which we deduce that $g_0$ is a right $\T$-approximation of $X$.
Hence we may choose $\Omega^{1}X=Z_0 \in \T\ast \cdots \ast \T[n-2]$.
The claim follows by applying this same reasoning inductively.
\end{proof}
If, in addition, $\T$ is $(n-1)$-strong, then it is easy to see that $X$ having no direct summands in $\T[1]\ast\cdots\ast\T[n-1]$ implies that each $\Omega^{i}X$ has no direct summands in $\T[1]\ast \cdots \ast \T[n-i-1]$.
In fact, suppose that $\Omega^{1}X = Z_1 \oplus Z_2$ with $Z_2\in\T[1]\ast\cdots\ast\T[n-2]$.
There is a triangle
\[ \xymatrix{ T_0 \ar[r] & X \ar[r] & (Z_1 \oplus Z_2)[1] \ar[r] & T_0[1] } \]
with $T_0\in\T$. Since $\T$ is $(n-1)$-strong, we have $\Hom_\C(Z_2[1],T_0[1])=0$. Thus $Z_2[1]$ is isomorphic to a direct summand of $X$, a contradiction.
The claim then follows by induction.
With this choice of $\Omega^{i}X$, we have the following:
\begin{proposition}
Let $\T$ be $n$-rigid and $(n-1)$-strong and let $X\in\T\ast\T[1]\ast \cdots \ast \T[n-1]$ having no direct summands in $\T[1]\ast\cdots\ast\T[n-1]$.
Then
\[ \I_X(\T[n-1])=0 \text{ if and only if } \pd \H \Omega^{n-2}X\leq 1.\] 
\end{proposition}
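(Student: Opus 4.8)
The plan is to reduce everything to the rigid case already settled in \cref{pd1iff}, using the chosen syzygies $\Omega^i X$ together with the propagation of factorisation ideals recorded in \cref{fiomegacor}. First I would fix the sequence $\Omega^i X$ supplied by the lemma preceding the statement, so that $\Omega^i X \in \T\ast\T[1]\ast\cdots\ast\T[n-i-1]$ for each $0 \le i < n$; specialising to $i = n-2$ gives $\Omega^{n-2}X \in \T\ast\T[1]$. Since $\T$ is $(n-1)$-strong and $X$ has no direct summands in $\T[1]\ast\cdots\ast\T[n-1]$, the discussion immediately preceding the statement shows that $\Omega^{n-2}X$ has no direct summands in $\T[1]$. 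Thus $\Omega^{n-2}X$ satisfies exactly the hypotheses of \cref{pd1iff}.

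Because $\T$ is $n$-rigid with $n \ge 2$, it is in particular rigid, so \cref{pd1iff} applies to $\Omega^{n-2}X$ and yields
\[ \pd\H\Omega^{n-2}X \le 1 \iff \I_{\Omega^{n-2}X}(\T[1]) = 0. \]
It therefore remains to match the two factorisation-ideal conditions, that is, to prove
\[ \I_X(\T[n-1]) = 0 \iff \I_{\Omega^{n-2}X}(\T[1]) = 0. \]

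For this I would invoke \cref{fiomegacor} in both directions. The implication $\I_{\Omega^{n-2}X}(\T[1]) = 0 \Rightarrow \I_X(\T[n-1]) = 0$ is precisely part (a), which uses only that $\T$ is $n$-rigid. For the converse I would apply part (b) with $m = n-1$: since $\T$ is $(n-1)$-strong, the vanishing $\I_X(\T[n-1]) = 0$ forces $\I_{\Omega^{(n-1)-1}X}(\T[1]) = \I_{\Omega^{n-2}X}(\T[1]) = 0$. Chaining these two implications with the equivalence produced by \cref{pd1iff} delivers the claimed statement.

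The only points requiring care are bookkeeping rather than new ideas: one must use the \emph{same} fixed choice of the non-unique objects $\Omega^i X$ throughout, so that \cref{pd1iff}, \cref{fiomegacor}(a) and \cref{fiomegacor}(b) all refer to the same syzygies, and one must verify the index constraints ($n \ge 2$ for rigidity, and $m = n-1$ in part (b)). I expect the main, though modest, obstacle to be checking that the chosen $\Omega^{n-2}X$ genuinely has no direct summands in $\T[1]$; this is exactly where the $(n-1)$-strong hypothesis is needed, and it is what licenses the appeal to \cref{pd1iff}.
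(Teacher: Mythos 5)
Your proposal is correct and follows essentially the same route as the paper: reduce to $\Omega^{n-2}X\in\T\ast\T[1]$ with no summands in $\T[1]$, apply \cref{pd1iff}, and match the two ideal conditions via the two parts of \cref{fiomegacor} (with $m=n-1$ in part (b)). The bookkeeping points you flag are exactly the ones the paper relies on from the preceding lemma and discussion.
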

\begin{proof}
Suppose that $\T$ is both $n$-rigid and $(n-1)$-strong. Then by \cref{fiomegacor}, 
\[ \I_{\Omega^{n-2} X}(\T[1]) = 0 \text{ if and only if } \I_{X}(\T[n-1]) = 0\]
Since $\Omega^{n-2} X\in\T\ast\T[1]$ having no direct summands in $\T[1]$, we have, by \cref{pd1iff}
\[ \I_{\Omega^{n-2} X}(\T[1]) = 0 \text{ if and only if } \pd \H \Omega^{n-2}X\leq 1 \]
\end{proof}

\begin{lemma}\label{g:l1}
Let $\T$ be $n$-rigid and $(n-1)$-strong. If $X\in\T\ast\T[1]\ast\cdots\ast\T[n-1]$ and
$\H X[-i]=0$ for all $0 < i < n$, then $\pd \H X\leq n-1$.
\end{lemma}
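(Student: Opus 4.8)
The plan is to realise $\pd\H X\le n-1$ by building, out of the iterated right $\T$-approximation triangles
\[ \Omega^{i+1}X\xrightarrow{a_i}T_i\xrightarrow{g_i}\Omega^i X\xrightarrow{w_i}(\Omega^{i+1}X)[1] \]
from \cref{fi}, an explicit projective resolution of $\H X$ of length $n-1$. Invoking the lemma immediately preceding this statement, I would fix the syzygies so that $\Omega^i X\in\T\ast\T[1]\ast\cdots\ast\T[n-i-1]$ for $0\le i\le n-1$; in particular $\Omega^{n-1}X\in\T$, so that $\H(\Omega^{n-1}X)$ is projective and will serve as the terminating term. First I would record the reinterpretation of the hypothesis: $\H X[-i]=0$ for $0<i<n$ says exactly that $\Hom_\C(\T[i],X)=0$ for $0<i<n$.

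The heart of the argument, and the step I expect to be the main obstacle, is the vanishing $\Hom_\C(\T[1],\Omega^i X)=0$ for $0\le i\le n-2$. This cannot be read off the filtration of $\Omega^i X$, since its $\T[1]$-layer contributes $\Hom_\C(\T[1],\T[1])\cong\Hom_\C(\T,\T)$, which is generally nonzero; the vanishing must instead be propagated from the hypothesis through the approximation triangles. I would prove the stronger claim that $\Hom_\C(\T[j],\Omega^i X)=0$ for all $1\le j\le n-1-i$ by induction on $i$, the base case $i=0$ being the reinterpreted hypothesis. For the inductive step, apply $\Hom_\C(\T[j],-)$ to the $i$-th triangle and read off the exact segment
\[ \Hom_\C(\T[j],\Omega^i X)\to\Hom_\C(\T[j-1],\Omega^{i+1}X)\to\Hom_\C(\T[j-1],T_i). \]
For $2\le j\le n-1-i$ the left term vanishes by the inductive hypothesis, while the right term $\Hom_\C(\T[j-1],T_i)\cong\Hom_\C(\T,\T[1-j])$ vanishes because $\T$ is $(n-1)$-strong; hence the middle term is zero, which is precisely the claim for $\Omega^{i+1}X$ at index $j-1$.

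Once $\Hom_\C(\T[1],\Omega^i X)=0$, equivalently $\H\big((\Omega^i X)[-1]\big)=0$, the long exact sequence obtained by applying $\H$ to the backward rotation $(\Omega^i X)[-1]\to\Omega^{i+1}X\xrightarrow{a_i}T_i$ forces $\H(a_i)$ to be a monomorphism. Combined with the fact that $g_i$ is a right $\T$-approximation (so $\H(g_i)$ is an epimorphism with kernel the image of $\H(a_i)$), this yields short exact sequences in $\mod\T$
\[ 0\to\H(\Omega^{i+1}X)\xrightarrow{\H(a_i)}\H T_i\xrightarrow{\H(g_i)}\H(\Omega^i X)\to 0\qquad(0\le i\le n-2). \]
Splicing them produces a projective resolution
\[ 0\to\H(\Omega^{n-1}X)\to\H T_{n-2}\to\cdots\to\H T_0\to\H X\to 0, \]
whose leftmost term is projective because $\Omega^{n-1}X\in\T$; this gives $\pd\H X\le n-1$.
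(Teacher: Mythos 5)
Your proof is correct and is essentially the paper's argument unrolled: the paper establishes the same Hom-vanishing for the first syzygy and then invokes the statement for $n-1$ by induction on $n$, whereas you carry the vanishing $\Hom_\C(\T[j],\Omega^i X)=0$ through all syzygies explicitly and then splice the resulting short exact sequences into a length-$(n-1)$ resolution. All the key ingredients coincide — the approximation triangles coming from the filtration $\T\ast\T[1]\ast\cdots\ast\T[n-1]$, the $(n-1)$-strongness of $\T$ killing the terms $\Hom_\C(\T[j-1],T_i)$, and the hypothesis $\H X[-i]=0$ supplying the base case of the propagation.
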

\begin{proof}
We begin by remarking that if $\T$ is $n$-rigid, then $\T$ is also $k$-rigid for all $2\leq k \leq n$ (an similarly for the $m$-strong property).

The proof is by induction on $n$. For $n=2$, we have $X\in\T\ast\T[1]$ thus there is a triangle
\[ \xymatrix{ T_1\ar[r] & T_0 \ar[r] & X \ar[r] & T_1[1] } \]
with $T_0,T_1\in\T$. Applying $\H$ to this triangle yields an exact sequence
\[ \xymatrix{ \H X[-1]\ar[r] & \H T_1\ar[r] & \H T_0 \ar[r] & \H X \ar[r] & 0 } \]
Since $\H X[-1] = 0$ we conclude that $\pd \H X\leq 1$.

Assume the statement holds for $n-1$.
Let $X\in\T\ast\T[1]\ast\cdots\ast\T[n-1]$.
There is a triangle
\[ \xymatrix{ Y \ar[r] & T \ar[r] & X \ar[r] & Y[1] } \]
with $T\in\T$ and $Y\in\T\ast\T[1]\ast \cdots \ast \T[n-2]$.
For every $i$ we have an exact sequence
\[ \xymatrix{ \H X[-i-1]\ar[r] & \H Y[-i]\ar[r] & \H T[-i] } \]
Since $\T$ is $(n-1)$-strong, $\H T[-i]=0$ for $0 < i < n-1$. Also $\H X[-i-1]=0$ for each $0 < i < n-1$
thus $\H Y[-i]=0$ for all $0 < i < n-1$. Hence $\pd\H Y\leq n-2$ by the inductive hypothesis.

Moreover, using the fact that $\H X[-1] = 0$, we have a short exact sequence
\[ \xymatrix{ 0\ar[r] & \H Y\ar[r] & \H T\ar[r] & \H X\ar[r] & 0} \]
with $\H T$ projective. Therefore $\pd \H X\leq n-1$.
\end{proof}

\begin{proposition}\label{nrpd1onlyif}
Let $\T$ be $n$-rigid and $(n-1)$-strong and let $X\in\T\ast\T[1]\ast\cdots\ast\T[n-1]$.
If $\I_X(\T[1]\ast\cdots\ast\T[n-1])=0$, then $\pd \H X\leq n-1$.
\end{proposition}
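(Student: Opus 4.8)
The plan is to argue by induction on $n$, at each stage peeling off one right $\T$-approximation and passing to the syzygy $\Omega^1 X$. The base case $n=2$ is exactly \cref{pd1onlyif}, so I assume $n\geq 3$ and write $\D=\T[1]\ast\cdots\ast\T[n-1]$ and $\D'=\T[1]\ast\cdots\ast\T[n-2]$. By the preceding lemma I may choose $\Omega^1 X\in\T\ast\cdots\ast\T[n-2]$ sitting in a triangle
\[ \xymatrix{ \Omega^1 X \ar[r]^-{q} & T_0 \ar[r]^-{g_0} & X \ar[r]^-{h_0} & (\Omega^1 X)[1] } \]
with $g_0$ a right $\T$-approximation of $X$, so that applying $\H$ gives an exact sequence $\H\Omega^1 X \to \H T_0 \to \H X\to 0$ with $\H T_0$ projective. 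The two facts that will drive the induction are that $\H(q)$ is a monomorphism and that $\I_{\Omega^1 X}(\D')=0$.

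First I would handle the monomorphism. The key preliminary is $\Hom_\C(\T[1],X)=0$: any $\psi\colon T[1]\to X$ composed with $h_0$ lies in $\I_X(\D)=0$, since both $T[1]$ and $(\Omega^1 X)[1]$ belong to $\D$, so $\psi$ factors through $g_0$, and the resulting map $T[1]\to T_0$ vanishes because $\Hom_\C(\T[1],\T)\cong\Hom_\C(\T,\T[-1])=0$ by $(n-1)$-strongness (here one uses $n\geq 3$). A short chase on the fibre $X[-1]\xrightarrow{h_0[-1]}\Omega^1 X$ of $q$ then shows $\H(q)$ is injective: any $\phi\colon T\to\Omega^1 X$ with $q\phi=0$ factors as $h_0[-1]\circ\phi'$ with $\phi'\in\Hom_\C(\T,X[-1])\cong\Hom_\C(\T[1],X)=0$. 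Thus $0\to\H\Omega^1 X\to\H T_0\to\H X\to 0$ is short exact with $\H T_0$ projective, giving $\pd\H X\leq \pd\H\Omega^1 X+1$.

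The second ingredient, which I expect to be the main obstacle, is propagating the factorisation hypothesis to the syzygy, namely $\I_{\Omega^1 X}(\D')=0$; this is what licenses the inductive hypothesis (with $\T$ viewed as $(n-1)$-rigid and $(n-2)$-strong) to yield $\pd\H\Omega^1 X\leq n-2$. The tempting route, \cref{fi:os}(b), only transports such ideals from $X$ to $\Omega^1 X$ with a single shift $\T[k]$ in the source, and a naive dévissage extending this over all of $\D'$ fails, since a morphism factoring through a subquotient of $A\in\D'$ need not factor through $\Omega^1 X$. Instead I would argue directly: given $\alpha\colon A\to\Omega^1 X$ and $\beta\colon\Omega^1 X\to B$ with $A,B\in\D'$, the composite $q\alpha$ vanishes because $\Hom_\C(\D',\T)=0$ (each $\Hom_\C(\T[j],\T)\cong\Hom_\C(\T,\T[-j])=0$ for $1\leq j\leq n-2$ by $(n-1)$-strongness, extended over $\ast$ by the dual of \cref{starnohom}). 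Hence $\alpha=h_0[-1]\circ\alpha'$ for some $\alpha'\colon A\to X[-1]$, and shifting up by one produces
\[ (\beta\alpha)[1] = \left(\beta[1]\circ h_0\right)\circ\alpha'[1], \]
a factorisation through $X$ of a morphism $A[1]\to B[1]$. Since $A[1],B[1]\in\D'[1]=\T[2]\ast\cdots\ast\T[n-1]\subseteq\D$, this composite lies in $\I_X(\D)=0$, so $\beta\alpha=0$.

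With both ingredients established the induction closes at once: the inductive hypothesis applied to $\Omega^1 X\in\T\ast\cdots\ast\T[n-2]$ gives $\pd\H\Omega^1 X\leq n-2$, whence $\pd\H X\leq n-1$. The delicate point is really the implication $\I_X(\D)=0\Rightarrow\I_{\Omega^1 X}(\D')=0$; the shift trick above is, in effect, a strengthening of \cref{fi:os}(b) that covers the whole source subcategory $\D'$ at once, and it is what avoids the failed dévissage.
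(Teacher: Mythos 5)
Your proof is correct, but it is organised quite differently from the paper's. The paper performs only one peeling step: from the triangle $Y \to T_0 \to X \xrightarrow{h} Y[1]$ with $Y[1]\in\T[1]\ast\cdots\ast\T[n-1]$, it uses the hypothesis $\I_X(\T[1]\ast\cdots\ast\T[n-1])=0$ to kill \emph{all} the connecting morphisms $\H(h[-i])$ for $1\leq i\leq n-1$ (by composing $h$ with shifted right $\T$-approximations $f[i]:T[i]\to X$), deduces $\H Y[-i]=0$ for $0<i<n-1$, and then hands the problem to \cref{g:l1}, whose internal induction runs purely on this cohomological vanishing and never mentions factorisation ideals again. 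You instead induct on the proposition itself, which obliges you to show that the ideal hypothesis descends to the syzygy, $\I_X(\T[1]\ast\cdots\ast\T[n-1])=0\Rightarrow\I_{\Omega^1 X}(\T[1]\ast\cdots\ast\T[n-2])=0$. Your shift argument for this (use $\Hom_\C(\T[1]\ast\cdots\ast\T[n-2],\T)=0$ to get $q\alpha=0$, factor $\alpha$ through $h_0[-1]$, then shift the composite by $[1]$ so that it lands in $\I_X(\T[1]\ast\cdots\ast\T[n-1])$) is valid, and it is, as you observe, a genuine strengthening of \cref{fi:os}(b) and \cref{fiomegacor}(b) in which the source ranges over the whole composite subcategory rather than a single shift of $\T$; that descent statement has some independent interest. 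A minor stylistic difference: you establish the short exact sequence by proving the stronger fact $\Hom_\C(\T[1],X)=0$, which invokes $(n-1)$-strongness, whereas the paper gets $\H(h[-1])=0$ from the ideal condition alone via a right $\T$-approximation of $X[-1]$. Both arguments ultimately use $(n-1)$-strongness in the same essential way, namely $\Hom_\C(\T[i],\T)=0$ for $0<i<n-1$.
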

\begin{proof}
Since $X\in\T\ast\T[1]\ast\cdots\ast\T[n-1]$, there is a triangle
\[ \xymatrix{ Y \ar[r] & T_0 \ar[r] & X \ar[r]^-{h} & Y[1] } \]
with $T_0\in\T$ and $Y[1]\in\T[1]\ast\cdots \ast \T[n-1]$.
Applying $\H$ to this triangle yields an exact sequence in $\mod\T$
\[ \xymatrix{ \H X[-i] \ar[rr]^-{\H(h[-i])} && \H Y[-i+1] \ar[r] & \H T_0[-i+1]} \]
For $1\leq i \leq n-1$, let $f:T\to X[-i]$ be a right $\T$-approximation of $X[-i]$. The composition
\[ \xymatrix{ T[i] \ar[r]^-{f[i]} & X \ar[r]^-{h} & Y[1] } \]
is an element of $\I_X(\T[1]\ast\cdots\ast\T[n-1])$, thus $h[-i]\circ f=0$.
Then $\H(h[-i]\circ f)=0$ and since $\H(f)$ is an epimorphism, we get $\H(h[-i])=0$.
Because $\T$ is $(n-1)$-strong, we have $\H T_0[-i]=0$ for $0 < i < n-1$.
Therefore $\H Y[-i]=0$ for $0 < i < n-1$ and since $Y\in\T\ast\T[1]\ast\cdots\ast\T[n-2]$ 
we may apply \cref{g:l1} to conclude that $\pd \H Y \leq n-2$.
Additionally, since $\H(h[-1])=0$, we have a short exact sequence
\[ \xymatrix{ 0\ar[r] & \H Y\ar[r] & \H T_0\ar[r] & \H X\ar[r] & 0} \]
thus the projective dimension of $\H X$ is at most $n-1$.
\end{proof}
By comparing \cref{pd1iff} and \cref{nrpd1onlyif}, it is natural to conjecture that
if $\T$ is $n$-rigid and $(n-1)$-strong and $X\in\T\ast\T[1]\ast\cdots\ast\T[n-1]$ containing no direct summands in $\T[1]\ast\cdots\ast\T[n-1]$, then
\[ \pd \H X\leq n-1 \text{ if and only if } \I_X(\T[1]\ast\cdots\ast\T[n-1])=0\]
We do not, however, have a proof of this claim.


\end{document}